\setlist[enumerate,1]{label=\upshape(\roman*)}
\theoremstyle{plain}
\newtheorem{theorem}{Theorem}[section]
\newtheorem{proposition}[theorem]{Proposition}
\newtheorem{lemma}[theorem]{Lemma}
\theoremstyle{definition}
\newtheorem*{acknowledgements}{Acknowledgements}
\theoremstyle{remark}
\newtheorem{remark}[theorem]{Remark}
\newtheorem{example}[theorem]{Example}
\numberwithin{equation}{section}
\NewDocumentCommand{\any}{}{\,\cdot\,}
\NewDocumentCommand{\hook}{}{\mathbin{\lrcorner}}
\NewDocumentCommand{\Lie}{ m }{\operatorname{#1}}
\NewDocumentCommand{\lie}{ m }{\operatorname{\mathfrak{#1}}}
\NewDocumentCommand{\GL}{}{\Lie{GL}}
\NewDocumentCommand{\SU}{}{\Lie{SU}}
\NewDocumentCommand{\su}{}{\lie{su}}
\NewDocumentCommand{\la}{ m }{\mathsf{#1}}
\NewDocumentCommand{\Hodge}{}{\mathop{}\!{*}}
\NewDocumentCommand{\bC}{}{\mathbb{C}}
\NewDocumentCommand{\bR}{}{\mathbb{R}}
\NewDocumentCommand{\bZ}{}{\mathbb{Z}}
\NewDocumentCommand{\CP}{}{\bC P}
\NewDocumentCommand{\cC}{}{\mathcal{C}}
\NewDocumentCommand{\cE}{}{\mathcal{E}}
\NewDocumentCommand{\point}{m}{\mathrm{#1}}
\NewDocumentCommand{\pA}{}{\point{A}}
\NewDocumentCommand{\pB}{}{\point{B}}
\NewDocumentCommand{\pC}{}{\point{C}}
\NewDocumentCommand{\pD}{}{\point{D}}
\DeclareMathOperator{\LIE}{Lie}
\DeclareMathOperator{\diag}{diag}
\NewDocumentCommand{\Id}{}{\mathrm{Id}}
\DeclarePairedDelimiter{\Span}{\langle}{\rangle}
\DeclarePairedDelimiter{\abs}{\lvert}{\rvert}
\DeclarePairedDelimiterX{\inp}[2]{\langle}{\rangle}{#1, #2}
\DeclarePairedDelimiter{\paren}{\lparen}{\rparen}
\NewDocumentCommand{\with}{}{\mid}
\NewDocumentCommand{\SetSymbol}{o}{\nonscript\:#1|
  \allowbreak\nonscript\:\mathopen{}}
\DeclarePairedDelimiterX{\Set}[1]{\lbrace}{\rbrace}{%
  \renewcommand{\with}{\SetSymbol[\delimsize]}%
#1 }
\NewDocumentCommand{\Fd}{m m}{\frac{d #1}{d #2}}
\NewDocumentCommand{\Fdt}{O{2} m m}{\frac{d^#1 #2}{d #3^#1}}
\NewDocumentCommand{\eqbreak}{O{2}}{\\&\hspace{#1em}}
\NewDocumentCommand{\eqand}{O{1}}{\hspace{#1em}\text{and}\hspace{#1em}}
\begin{document}

\title{Cohomological lifting of multi-toric graphs}

\author{Kael Dixon}

\email{kael.dix@gmail.com}

\author{Thomas Bruun Madsen}

\address{School of Computing and Engineering\\
University of West London\\
St. Mary’s Road\\
Ealing\\
London W5 5RF\\
United Kingdom}

\email{thomas.madsen@uwl.ac.uk}

\author{Andrew Swann}

\address{Department of Mathematics and DIGIT\\
Aarhus University\\
Ny Munkegade 118, Bldg 1530\\
DK-8000 Aarhus C\\
Denmark}

\email{swann@math.au.dk}

\begin{abstract}
  We study \( G_{2} \)-manifolds obtained from circle bundles over symplectic
  \( \SU(3) \)-manifolds with \( T^{2} \)-symmetry.
  When the geometry is multi-Hamiltonian, we show how the compact part of the
  resulting multi-moment graph for the \( G_{2} \)-structure may obtained
  cohomologically from the base.
  The lifting procedure is illustrated in the context of toric geometry.
\end{abstract}

\subjclass{Primary 53C25; secondary 14M25, 53C26, 53C29, 53D20}

\maketitle

\section{Introduction}
\label{sec:introduction}

Symmetry is a powerful tool to understand different classes of geometric
structures.
For a manifold~\( M \) with a closed differential form preserved by a Lie
group~\( G \), the notion of multi-moment was introduced in
\cite{Madsen-S:multi-moment,Madsen-S:closed}.
If \( G \) is Abelian, this is a smooth invariant map \( \nu\colon M \to V \), for a
certain finite-dimensional vector space~\( V \), which generalises the usual
notion of moment map~\( \mu\colon M \to \lie{g}^{*} \) in symplectic geometry.
We will study this map for special geometries in dimensions \( 6 \) and~\( 7 \)
with torus symmetry.
Here, there is a trivalent graph~\( \Gamma \) immersed in~\( V \) given by the image
under \( \nu \) of the points with non-trivial stabiliser.
These immersions have edges that are line segments, and satisfy a zero-tension
condition at each vertex.
For Calabi-Yau three-folds these graphs are often used in the physics literature,
see Bouchard~\cite{Bouchard:CY}, where they are planar.
In \cite{Madsen-S:toric-G2}, we showed that for \( 7 \)-manifolds with
holonomy~\( G_{2} \), there are also such graphs, but now the graphs live in
three-dimensions.

Recently, Foscolo, Haskins and Nordström \cite{Foscolo-HN:from-CY} gave a construction many examples of
manifolds of holonomy~\( G_{2} \) as circle bundles over Calabi-Yau three-folds
with certain asymptotics; a powerful result, that significantly increases the
number of known non-compact \( G_{2} \)-examples, thanks to a wealth of
information from the Calabi-Yau literature.
This motivates us to study how graphs may be lifted from \( 6 \)-dimensional
geometries to those in dimension~\( 7 \).
One of our main results, see Section~\ref{sec:graph-lifting}, is that for
compact edges of the graph, this information is determined cohomologically by
the classes of the symplectic structure on the base and the curvature form of
the circle bundle.
We derive the statements in a broader setting than that of reduced holonomy,
requiring just that the \( G_{2} \) three-form is closed and by using the
sympletic property of the base.
Some generality in this direction is needed to apply the results to the
construction of \cite{Foscolo-HN:from-CY}, since there the base Calabi-Yau
structure is deformed to a more general \( \SU(3) \)-structure before getting a
submersion structure, with just the group action and sympletic form surviving
the process.

In Section~\ref{sec:toric-CY}, we specialise to the case when the base is a toric
Calabi-Yau manifold.
There we can use data from the associated fans, and show that loops in the
\( \SU(3) \)-graphs lift to closed loops in the \( G_{2} \)-graphs precisely
when a cohomological condition in~\cite{Foscolo-HN:from-CY} is satisfied.
This indicates that the condition is intrinsic to the general set-up and not
purely an artefact of the adiabatic limit construction used
in~\cite{Foscolo-HN:from-CY}.
We close the paper with a number of explicit examples, demonstrating the utility
of the lifting formulae.

\begin{acknowledgements}
  We are grateful for partial support from the Aarhus University Research
  Foundation.
  We thank Lorenzo Foscolo for useful discussions of results in this area.
\end{acknowledgements}

\section{Special multi-toric geometries}
\label{sec:special-multi-toric}

A \( G_{2} \)-structure on a seven-dimensional manifold \( M \) is given by a
three-form \( \varphi \) that is pointwise linearly equivalent to
\begin{equation*}
  e^{123} - e^{147} - e^{167} - e^{246} - e^{275} - e^{347} - e^{356},
\end{equation*}
where \( e^{i} \) is a basis for \( (\bR^{7})^{*} \cong T_{p}^{*}M \) and
\( e^{abc} = e^{a} \wedge e^{b} \wedge e^{c} \).
This basis is orthonormal for a Riemannian metric~\( g \) and specifies an
orientation, so also a Hodge \( \Hodge \)-operator; these objects depend only
on~\( \varphi \), see Bryant~\cite{Bryant:exceptional}.
The \( G_{2} \)-structure is \emph{closed} if \( d\varphi = 0 \), \cite{Cleyton-S:G2}.
If both \( d\varphi = 0 = d \Hodge \varphi \), then the (restricted) holonomy
of~\( g \) is contained in~\( G_{2} \), and the structure is called
\emph{torsion-free}.

Suppose a three-dimensional torus \( T^{3} = \bR^{3}/\bZ^{3} \) acts effectively
on~\( M \) preserving~\( \varphi \), and hence \( g \) and~\( \Hodge \varphi \).
If \( \la{U}_{1},\la{U}_{2},\la{U}_{3} \) is a basis for \( \LIE(T^{3}) \), then
we write \( U_{1},U_{2},U_{3} \) for the corresponding vector fields generated
on~\( M \).
When \( d\varphi = 0 \), repeated use of the Cartan formula gives that the
one-form~\( \varphi(U_{i},U_{j},\any) \) is closed.
A closed \( G_{2} \)-structure is \emph{multi-Hamiltonian} if there is an
invariant function
\( \nu = (\nu_{1},\nu_{2},\nu_{3}) \colon M \to \bR^{3} = \Lambda^{2}\LIE(T^{3})^{*} \) such
that
\begin{equation}
  \label{eq:d-nu}
  d\nu_{i} = \varphi(U_{j},U_{k},\any) \qquad\text{for \( (ijk) = (123) \) as permuations.}
\end{equation}
We call \( \nu \) a \emph{multi-moment map} and note that \( \nu \) is guaranteed to
exist if \( b_{1}(M) = 0 \).
In \cite[Lemmas 2.5 and 2.6]{Madsen-S:toric-G2}, we showed that
\( \varphi(U_{1},U_{2},U_{3}) = 0 \) and that each isotropy is a connected subtorus of
dimension at most~\( 2 \).

If, in addition, the geometry is torsion-free, then there is also a multi-moment
map \( \hat{\nu} \) associated to~\( \Hodge \varphi \) satisfying
\( d\hat{\nu} = \Hodge \varphi(U_{1},U_{2},U_{3},\any) \).
It is notable, see \cite[Theorem~4.5]{Madsen-S:toric-G2}, that
\( (\nu,\hat{\nu}) \) induces a local homeomorphism of the orbit space
\( M/T^{3} \) to~\( \bR^{4} \).

Suppose now that \( \la{U}_{3} \) generates a free circle-action on~\( M \) of
fundamental period~\( 1 \).  We set
\begin{equation*}
   \theta = \frac{g(U_{3},\any)}{g(U_{3},U_{3})},
\end{equation*}
so \( \theta(U_{3}) = 1 \) and \( \theta \) is \( T^{3} \)-invariant.
Put \( B = M/\Span{U_{3}} \) and write \( \pi\colon M \to B \) for the projection.
From Apostolov and Salamon \cite[cf. Proposition~1]{Apostolov-S:K-G2}, we get the decompositions
\begin{equation}
  \label{eq:G2-decomposition}
  \begin{gathered}
    g = \theta^{2}\pi^{*}(h^{-1}) + \pi^{*}(h^{1/2}g_{B}),\\
    \varphi = \theta \wedge \pi^{*}(\omega) + \pi^{*}(h^{3/4}\psi_{+}),\\
    \Hodge \varphi = - \theta \wedge \pi^{*}(h^{1/4}\psi_{-}) + \pi^{*}(\tfrac{1}{2}h \omega^{2}),
  \end{gathered}
\end{equation}
where \( \pi^{*}h = g(U_{3},U_{3})^{-1} > 0 \) is smooth, and
\( (g_{B},\omega,\psi_{c} = \psi_{+}+i\psi_{-}) \) defines an
\( \SU(3) \)-structure on~\( B \).

Note that \( \pi^{*}(h) \) is also \( T^{3} \)-invariant, so the
\( \SU(3) \)-structure on \( B \) is preserved by the two-torus
\( T^{2} = T^{3}/\Span{\exp\la{U}_{3}} \).
If the \( G_{2} \)-structure is closed, then we have
\begin{equation*}
  0 = d\varphi = \theta \wedge \pi^{*}(d\omega) + \pi^{*}(F \wedge \omega + d(h^{3/2}\psi_{+})),
\end{equation*}
where \( \pi^{*}F = d\theta \) is the curvature of the circle bundle
\( M \to B \).
Contracting with \( U_{3} \), we see that \( d\omega = 0 \), so the
\( \SU(3) \)-structure is \emph{symplectic}.

We observe that
\begin{gather*}
  d\nu_{1} = \varphi(U_{2},U_{3},\any) = - (\pi^{*}\omega)(U_{2},\any),\\
  d\nu_{2} = \varphi(U_{3},U_{1},\any) = (\pi^{*}\omega)(U_{1},\any).
\end{gather*}
So putting
\begin{equation*}
  X_{1} = -\pi_{*}(U_{2}),\quad X_{2} = \pi_{*}(U_{1}),
\end{equation*}
we have a symplectic moment map
\( \mu = (\mu_{1},\mu_{2}) \colon B \to \bR^{2} \cong \LIE(T^{2})^{*} \) given by
\( \pi^{*}\mu = (\nu_{1},\nu_{2}) \), that is, \( \mu_{i} \)~are invariant functions
satisfying
\begin{equation}
  \label{eq:symplectic-mu}
  d\mu_{i} = \omega(X_{i},\any).
\end{equation}

There is a weak converse: given an \( \SU(3) \)-structure
\( (g_{B},\omega,\psi_{c}) \) on a six-manifold~\( B \), then
\( M = S^{1} \times B \) carries a product \( G_{2} \)-structure built as
in~\eqref{eq:G2-decomposition}, with \( \theta = ds \), for \( s \) the standard
parameter on \( S^{1} = \bR/\bZ \), and \( h \equiv 1 \).
This product structure is closed if and only if \( B \)~is \emph{symplectic
half-flat}, meaning \( d\omega = 0 = d\psi_{+} \), \cite{Conti-T:symplectic}, and has
holonomy (strictly) contained in \( G_{2} \) if and only if \( B \)~is
\emph{Calabi-Yau}, meaning \( d\omega = 0 = d\psi_{c} \).
Taking \( U_{1} = X_{2} \), \( U_{2} = -X_{1} \), \( U_{3} = \partial_{s} \), we may
transfer terminology from above for~\( M \) to~\( B \).
In particular, a symplectic \( B \) is \emph{multi-Hamiltonian} for a two-torus
action generated by \( X_{1},X_{2} \) preserving the \( \SU(3) \)-structure, if
there are invariant functions \( \mu_{1},\mu_{2} \)
satisfying~\eqref{eq:symplectic-mu}.
It follows that all stabilisers are connected sub-tori, and that
\( \omega(X_{1},X_{2}) = 0 \).
If \( B \) is symplectic half-flat, then at least locally we also have an
invariant function~\( \hat{\mu}_{+} \) with
\( d\hat{\mu}_{+} = \psi_{+}(X_{1},X_{2},\any) = d\nu_{3} \).
The functions \( (\mu_{1},\mu_{2},\hat{\mu}_{+}) \) are guaranteed to exist globally
if \( b_{1}(B) = 0 \).
If \( B \) is also Calabi-Yau, then there is an additional multi-moment map
\( \hat{\mu}_{-} \) satisfying
\( d\hat{\mu}_{-} = \psi_{-}(X_{1},X_{2},\any) = d\hat{\nu} \).
Again, we have that
\( (\mu,\hat{\mu}) = (\mu_{1},\mu_{2},\hat{\mu}_{+},\hat{\mu}_{-}) \) induces a local
homeomorphism \( B/T^{2} \to \bR^{4} \).

A significant source of Calabi-Yau structures is available within the category
of toric six-manifolds~\( B \).
These are connected and have a Hamiltonian action of a three-torus \( T^{3} \)
preserving the symplectic form~\( \omega \) and metric~\( g \), and consequently the
complex structure given by \( \omega = g(J\any,\any) \).
The complex volume form~\( \psi_{c} \) is a parallel section
of~\( \Lambda^{3,0}(B) \) of fixed length, so any other
section~\( \tilde{\psi}_{c} \) of \( \Lambda^{3,0}(B) \) of this length has the form
\( \tilde{\psi}_{c} = e^{2\pi i t}\psi_{c} \) for some local function
\( t \colon B \to \bR \).
If \( \tilde{\psi}_{c} \) is parallel then
\( 0 = \nabla\tilde{\psi}_{c} = 2\pi i\, e^{2\pi i t} dt \otimes \psi_{c} \), so
\( t \)~is constant and globally defined modulo~\( 1 \).
The action of~\( T^{3} \) on~\( B \) induces via pull-back an action on the
parallel sections of~\( \Lambda^{3,0}(B) \) of fixed length and hence gives a group
homomorphism \( T^{3} \to S^{1} \).
The kernel of this action as compact of dimension~\( 2 \), so its component of
the identity is a two-torus~\( T^{2} \) that acts on~\( B \) preserving the
\( \SU(3) \)-structure.
(See Li et al.\ \cite[Section~3.1]{Li-LLZ:top-vertex} for an algebraic version of
this.)
If \( b_{1}(B) = 0 \), we see that the \( T^{2} \)-action is multi-Hamiltonian.

A wide class of examples is given by toric asymptotically conical (AC)
Calabi-Yau three-folds.
These are asymptotic, with polynomial decay rates, to cones over toric
Sasaki-Einstein manifolds of dimension five, as constructed from good polytopes
by Cho, Futaki and Ono \cite{Cho-FO:toric-SE}.
Taking crepant resolutions of these cones yields smooth AC Calabi-Yau manifolds
with \( T^{3} \)-symmetry, classified by
Conlon and Hein \cite[Theorem~4.3]{Conlon-H:acCY-class}.
These structures are uniquely specified by the cohomology class of their Kähler
form.
Note that Li~\cite{Li:SYZ-Calabi-Yau} provides an example of a non-toric
Calabi-Yau manifold, with different asymptotics, that has a multi-Hamiltonian
action of~\( T^{2} \).

Suppose \( B \) is an AC Calabi-Yau three-fold with Kähler
form~\( \omega_{0} \), and that \( M \to B \) is a circle bundle over \( B \) with
curvature~\( F \) such that \( [F] \ne 0 \) in \( H^{2}(B) \) and
\( [\omega_{0}] \cup [F] = 0 \) in~\( H^{4}(B) \).
Then Foscolo, Haskins and Nordström \cite{Foscolo-HN:from-CY} construct metrics of holonomy~\( G_{2} \)
on~\( M \) which are asymptotically locally conical.
These are obtained via a particular type of adiabatic limit, so that in
\eqref{eq:G2-decomposition} we have \( \omega = \epsilon\omega_{0} \) with
\( h = h_{\epsilon} \), \( \psi_{c} = \psi_{c,\epsilon} \), for a parameter
\( \epsilon > 0 \), with \( \lim_{\epsilon\to0}h_{\epsilon} \equiv 1 \) and
\( \lim_{\epsilon\to0}\psi_{c,\epsilon} \) the Calabi-Yau complex volume form
of~\( B \).
Arguments of Cavalleri~\cite{Cavalleri:T2-AC}, show that if the initial Calabi-Yau
data is \( T^{2} \)-invariant, then so are the deformed symplectic
\( \SU(3) \)-structures of the solutions.

\section{Graph lifting}
\label{sec:graph-lifting}

Associated to the multi-Hamiltonian geometries above there are certain graphs
determined by the points with non-trivial stabilisers.
For toric \( G_{2} \)-manifolds these were described in
\cite{Madsen-S:toric-G2,Madsen-S:multi-large}, for toric Calabi-Yau structures
they are known in the physics literature as
\( pq \)-webs~\cite{Aharony-HK:pq-5}.
However, the existence of such graphs does not need the holonomy reduction.

\begin{proposition}
  \label{prop:graph}
  Let \( M \) be a manifold with a closed \( G_{2} \)-structure and an action
  of~\( T^{3} \) that is multi-Hamiltonian.
  Write \( M^{0} \) for the points of~\( M \) with non-trivial stabiliser.
  Then \( \Gamma_{M} = M^{0}/T^{3} \subset M/T^{3} \) is an embedded trivalent graph.

  Furthermore, the multi-moment map
  \( \nu\colon M \to \Lambda^{2}\LIE(T^{3})^{*} \) maps each edge of the graph to a
  non-trivial part of a straight line with rational tangent
  in~\( \bR^{3} = \Lambda^{2}\LIE(T^{3})^{*} \).
  At the image of each vertex, these lines satisfy a zero-tension condition.
\end{proposition}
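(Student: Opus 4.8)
The plan is to analyse the $T^{3}$-action orbit by orbit via the slice theorem, reading off the local shape of $\Gamma_{M}$ from the linear isotropy representations, which at every point~$p$ factor through $G_{2}=\mathrm{Aut}(\varphi_{p})\subset\SO(7)$. By \cite[Lemmas~2.5 and~2.6]{Madsen-S:toric-G2} each stabiliser is a connected subtorus of~$T^{3}$ of rank one or two, and effectiveness forces it to inject into~$G_{2}$, since an element acting trivially on~$T_{p}M$ would fix a neighbourhood of~$p$, hence all of the connected manifold~$M$. Write $M^{(r)}$ for the points whose stabiliser has rank~$r$, so that $M^{0}=M^{(1)}\cup M^{(2)}$; the claim will be that $M^{(2)}/T^{3}$ is the vertex set and the components of $M^{(1)}/T^{3}$ the open edges.

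At $p\in M^{(2)}$ the stabiliser~$K$ is a torus of rank $2=\rank G_{2}$, hence a maximal torus, and its action on $T_{p}M\cong\bR^{7}$ is therefore the standard one: a trivial line together with three pairwise-independent coordinate two-planes on which $K$ acts with weights $\epsilon_{1},\epsilon_{2},\epsilon_{3}$ satisfying $\epsilon_{1}+\epsilon_{2}+\epsilon_{3}=0$. The trivial line is exactly the tangent to the one-dimensional orbit $T^{3}\cdot p$, so the slice theorem presents a neighbourhood of the orbit as $T^{3}\times_{K}\bC^{3}$; thus $M^{0}$ there is $T^{3}\times_{K}(\bC_{1}\cup\bC_{2}\cup\bC_{3})$, whose quotient is three arcs issuing from the single point~$[p]$, carrying the distinct stabiliser circles $(\ker\epsilon_{i})^{\circ}$, so $[p]$ has degree exactly three. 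At $q\in M^{(1)}$ the stabiliser~$S^{1}$ injects into~$G_{2}$ as a circle, whose weights on~$\bR^{7}$ are $0$ and $\pm n_{1},\pm n_{2},\pm n_{3}$ with $n_{1}+n_{2}+n_{3}=0$; so at most one~$n_{i}$ vanishes and the fixed space has dimension~$1$ or~$3$, with dimension~$1$ excluded because $M^{S^{1}}$ contains the two-dimensional orbit $T^{3}\cdot q$. Hence $M^{S^{1}}$ is three-dimensional, $T^{3}/S^{1}$ acts on it with generic two-dimensional orbits, and the quotient with the rank-two orbits removed is a one-manifold. As $T^{3}$ is compact, $M/T^{3}$ is Hausdorff, and these local pictures---an interval along an edge, a tripod at a vertex---exhibit $\Gamma_{M}$ as an embedded trivalent graph.

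For the second assertion, fix an edge with stabiliser~$S^{1}$ generated by $\sigma=a_{1}\la{U}_{1}+a_{2}\la{U}_{2}+a_{3}\la{U}_{3}$ with $(a_{1},a_{2},a_{3})\in\bZ^{3}$ primitive. Then $V=a_{1}U_{1}+a_{2}U_{2}+a_{3}U_{3}$ vanishes on~$M^{S^{1}}$, so $\iota_{V}\varphi=0$ there; contracting with each~$U_{m}$ and using $d\nu_{i}=\varphi(U_{j},U_{k},\any)$ yields $a_{i}\,d\nu_{j}=a_{j}\,d\nu_{i}$ on~$M^{S^{1}}$, so $d\nu$ is everywhere a multiple of the rational vector $(a_{1},a_{2},a_{3})$ and $\nu$ maps the edge into a line with that direction. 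The image is non-trivial: picking~$m$ with $a_{m}\ne0$ and writing $d\nu_{m}=\pm\varphi(U_{i},U_{j},\any)$ for the other two indices, at a point with stabiliser exactly~$S^{1}$ the vectors $U_{i},U_{j}$ are independent and, with any direction in~$TM^{S^{1}}$ transverse to them, span the three-dimensional $S^{1}$-fixed subspace of~$T_{q}M$, on which $\varphi_{q}$ restricts to a volume form---as one checks in the linear model, where such a circle fixes an $\bR\oplus\bC\subset\bR\oplus\bC^{3}$ on which the model three-form is a volume form. Hence $d\nu_{m}$, and so $d\nu$, is nowhere zero along the edge, $\nu$ is an injective (monotone) immersion onto a non-degenerate segment, and no edge is a closed loop.

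Finally, at a vertex with stabiliser~$K$ and weights~$\epsilon_{i}$, the $i$-th incident edge has, by the previous paragraph, outgoing direction a positive multiple of a primitive generator~$\sigma_{i}$ of $(\ker\epsilon_{i})^{\circ}$, that is, with $\epsilon_{i}(\sigma_{i})=0$; transporting $\epsilon_{1}+\epsilon_{2}+\epsilon_{3}=0$ from $\LIE(K)^{*}$ over to $\LIE(K)$ gives the zero-tension (balancing) relation among these three directions at~$\nu(p)$. I expect the one genuinely delicate point to be this last step: fixing the signs so the three vectors are honestly outgoing, and reading off the positive integral weights in the balancing relation from the comparison between the weight lattice of~$K$ and its rotate. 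Everything else is a direct consequence of the slice theorem, the $G_{2}$-representation theory recalled above, and the identity $\iota_{V}\varphi=0$.
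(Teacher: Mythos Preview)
Your argument is structurally the same as the paper's: both analyse the isotropy representation inside $G_{2}$ via the slice theorem, identify the rank-two stabilisers as maximal tori acting on $\bR\oplus\bC^{3}$ with weights summing to zero, and show that along an edge the differential $d\nu$ points in the direction of the stabiliser generator. Your derivation of the edge direction via $\iota_{V}\varphi=0$ and your use of the weight constraint $n_{1}+n_{2}+n_{3}=0$ to force the fixed set to be three-dimensional are slightly cleaner than the paper's more explicit computations with the cross-product and the pairing $\inp{d\nu}{\la{C}}$, but the content is the same.

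The one place your proposal falls short is exactly where you flag it: the zero-tension step. Knowing that the three edge directions $\sigma_{i}$ span the lines $\ker\epsilon_{i}\subset\LIE(K)$ and that $\sum\epsilon_{i}=0$ in $\LIE(K)^{*}$ does not, by itself, pin down the signs of the~$\sigma_{i}$ so that they sum to zero; one needs to check that the \emph{outgoing} directions are consistently $\pm J\epsilon_{i}$ for a single rotation~$J$ of the rank-two lattice. The paper sidesteps this by invoking the equivariant Darboux theorem to put the symplectic slice in standard form on~$\bC^{3}$, then computing the K\"ahler moment map explicitly as $\mu=\tfrac{1}{2}(\abs{z_{3}}^{2}-\abs{z_{1}}^{2},\abs{z_{3}}^{2}-\abs{z_{2}}^{2})$ and reading off the primitive outgoing tangents $(-1,0),(0,-1),(1,1)$ along the coordinate axes. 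Your abstract route can be completed along the same lines---the outgoing $\mu$-direction along the $z_{i}$-axis is $-\epsilon_{i}$ precisely because the weight on~$\bC_{i}$ is~$\epsilon_{i}$---but as written the ``transport'' from $\LIE(K)^{*}$ to $\LIE(K)$ is an assertion, not an argument.
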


Let us explain some of the terminology from the statement.
Graphs are to be understood in the sense of
\cite[Definiton~12]{Yeats:combinatorial-QFT}, so there may be both edges that
join two vertices and unbounded edges that meet only a single vertex.
The graph is \emph{trivalent} if exactly three distinct edges meet at each
vertex.

Recall that \( T^{3} = \bR^{3}/\bZ^{3} \).
So we can identify \( \LIE(T^{3}) \) with~\( \bR^{3} \), and \( \bZ^{3} \)~is
then the integral lattice.
To be concrete, given a \( \bZ \)-basis \( \la{U}_{1},\la{U}_{2},\la{U}_{3} \)
of \( \bZ^{3} \subset \bR^{3} = \LIE(T^{3}) \), we can consider the dual basis
\( u^{1},u^{2},u^{3} \).
This generates a lattice~\( \bZ^{3} \) in~\( \LIE(T^{3})^{*} \cong \bR^{3} \).
The target space \( \Lambda^{2}\LIE(T^{3})^{*} \cong \bR^{3} \) of~\( \nu \) also has a
lattice~\( \bZ^{3} \), generated by \( u^{2} \wedge u^{3} \),
\( u^{3} \wedge u^{1} \) and \( u^{1} \wedge u^{2} \).
These identifications are canonical if we fix an orientation
of~\( \LIE(T^{3}) \), since change of \( \bZ \)-basis is given by an element
of~\( \GL(3,\bZ) \) and \( \det(\GL(3,\bZ)) = \Set{\pm1} \).

A \emph{straight line} in \( \Lambda^{2}\LIE(T^{3})^{*} \) with \emph{rational
tangent} is an affine line of the form
\begin{equation}
  \label{eq:str-line}
  r + \bR \la{U} \hook u^{123},
\end{equation}
where \( \la{U} \in \bZ^{3} \setminus \Set{0} \),
\( r \in \Lambda^{2}(\LIE(T^{3}))^{*} \) and
\( u^{123} = u^{1} \wedge u^{2} \wedge u^{3} \).
We can choose \( \la{U} \) to be \emph{primitive}, meaning that it is not a
positive multiple of another vector in~\( \bZ^{3} \).
Note that \eqref{eq:str-line} contains the half-line
\( r + \bR_{\geqslant 0}\la{U} \hook u^{123} \).
If three half-lines given by primitive vectors \( \la{U},\la{V},\la{W} \) meet
at~\( r \), then we say there is \emph{zero-tension} at the vertex~\( r \) if
\( \la{U}+\la{V}+\la{W}=0 \).
The final claim of the Proposition is that for each vertex~\( x \) of the graph,
the three half-lines of~\( \nu(M^{0}) \) meeting at~\( r = \nu(x) \) have
zero-tension at~\( r \).

\begin{proof}[Proof of Proposition~\ref{prop:graph}]
  Consider a point \( p \in M^{0} \).
  As noted in Section~\ref{sec:special-multi-toric}, the stabiliser of~\( p \)
  is a connected subtorus of~\( T^{3} \) of dimension at most~\( 2 \).
  So there is some non-zero
  \( \la{U} = \sum_{i=1}^{3} b_{i}\la{U}_{i} \in \LIE(T^{3}) \) such that the
  generated vector field~\( U \) fixes~\( p \) and has fundamental
  period~\( 1 \).
  Let \( M_{p}^{U} \subset M^{0} \) be the connected component of the fixed-point
  set~\( M^{U} = \Set{x \in M \with U_{x} = 0} \) that contains~\( p \).
  We will show that \( \nu(M_{p}^{U}) \) is a non-trivial part of a straight line
  in \( \bR^{3} = \Lambda^{2}\LIE(T^{3})^{*} \), when \( p \) is just stabilised by a
  circle.

  Given \( \la{C} \in \Lambda^{2}\LIE(T^{3}) \), we may write
  \begin{equation*}
    \la{C} = \sum_{(jk\ell) = (123)} c_{j} \la{U}_{k} \wedge \la{U}_{\ell}.
  \end{equation*}
  The definition~\eqref{eq:d-nu} of the multi-moment map~\( \nu \) says
  \begin{equation*}
    \inp{d\nu}{\la{C}} = \sum_{(jk\ell) = (123)} c_{j} \varphi(U_{k},U_{\ell},\any)
    = \sum_{(jk\ell) = (123)} c_{j} U_{k} \wedge U_{\ell} \hook \varphi.
  \end{equation*}
  Since, for \( q \in M^{U}_{p} \) we have \( U_{q} = 0 \), we see that
  \( \inp{d\nu_{q}}{\la{C}} = 0 \) whenever there is a \( \la{V} \) such that
  \( \la{C} = \la{U} \wedge \la{V} \).
  This is the same as \( \la{C} \wedge \la{U} = 0 \).  But
  \begin{equation*}
    \la{C} \wedge \la{U} = \sum_{(ijk)=(123)} c_{i}b_{i}\, \la{U}_{j} \wedge \la{U}_{k} \wedge
    \la{U}_{i} = c^{T}b \, \la{U}_{1} \wedge \la{U}_{2} \wedge \la{U}_{3}.
  \end{equation*}
  So \( \inp{d\nu_{q}}{\la{C}} = 0 \) whenever \( c^{T}b = 0 \).
  On the other hand,
  \begin{equation*}
    (\la{U}\hook u^{123})(\la{C}) = \sum_{(ijk)=(123)} b_{i}u^{jk}(\la{C}) =
    c^{T}b,
  \end{equation*}
  so \( \ker (\la{U} \hook u^{123}) \leqslant \ker(d\nu(T_{q}M)) \) and is two-dimensional.
  Thus \( \nu(M_{p}^{U}) \) is contained in the line
  \( \nu(p) + \bR \la{U}\hook u^{123} \).

  Suppose the stabilisers of~\( p \) and \( q \) are exactly one-dimensional.
  We may choose \( \la{V},\la{W} \in \LIE(T^{3}) \) so that
  \( \la{U} \wedge \la{V} \wedge \la{W} = \la{U}_{1} \wedge \la{U}_{2} \wedge \la{U}_{3} \).
  Then \( \inp{d\nu_{q}}{\la{V} \wedge \la{W}} = \varphi_{q}(V,W,\any) \) is non-zero and
  \( \la{U} \)-invariant.
  We have \( \varphi(V,W,\any) = g(V \times W,\any) \), by the defining equation for the
  \( G_{2} \) cross-product~\( \times \).
  The vectors \( V \), \( W \) and \( V \times W \) are \( U \)-invariant and
  linearly independent at~\( q \).
  But as the \( T^{3} \)-action is effective, \( U \)~must act non-trivially on
  their four-dimensional orthogonal complement~\( Z_{q} \subset T_{q}M \).
  As \( U \) preserves the \( G_{2} \)-structure and two linearly independent
  vectors, we therefore have that \( U \) acts on~\( Z_{q} \) as a non-zero
  diagonal matrix~\( \diag(it,-it) \) in~\( \su(2) \) in the standard
  representation on \( \bC^{2} = \bR^{4} \).
  We see that the tangent space to \( M^{U}_{p} \) is spanned by
  \( V, W, V \times W \) at~\( q \), cf.~\cite[Section 4.2.3]{Madsen-S:toric-G2}, so
  \( M^{U}_{p} \) is associative.
  Consequently, \( d\nu_{q} \) has rank one on this submanifold.
  It follows that \( \nu \) is injective on the submanifold~\( M^{U}_{p} \).

  At a point~\( p \) where the stabiliser is of dimension two, this must be a
  two-torus \( T^{2} \subset T^{3} = \bR^{3}/\bZ^{3} \).
  The \( T^{3} \)-action stabilises the one-dimensional orbit of
  \( S^{1} = T^{3}/T^{2} \) through~\( p \) and preserves the
  \( G_{2} \)-structure.
  It follows that \( T^{2} \) acts on \( T_{p}M \) as \( \bR \oplus \bC^{3} \), with
  the action on \( \bC^{3} \) as the maximal torus in~\( \SU(3) \).
  We get a \( \bZ \)-basis \( \la{U}_{1},\la{U}_{2},\la{U}_{3} \) of
  \( \bZ^{3} \) such that \( \la{U}_{1},\la{U}_{2} \) span \( \LIE(T^{2}) \) and
  \( \la{U}_{3} \) generates the quotient~\( S^{1} \).
  The point \( p \) has a tubular neighbourhood that is equivariantly
  diffeomorphic to~\( S^{1} \times \bC^{3} \).
  Viewing this as a circle bundle over~\( \bC^{3} \), the discussion of
  Section~\ref{sec:special-multi-toric} shows that \( \bC^{3} \) inherits a symplectic
  form.
  By the equivariant Darboux Theorem for the \( T^{2} \)-action, this is
  symplectomorphic to the standard symplectic form on~\( \bC^{3} \).
  We may choose a basis so that \( T^{2} \) acts via the diagonal matrices
  \( D_{\theta,\phi} = \diag(e^{2\pi i\theta},e^{2\pi i\phi},e^{-2\pi i(\theta+\phi)}) \)
  in~\( \SU(3) \).
  In this neighbourhood, \( S^{1} \times \Set{0} \) has stabiliser~\( T^{2} \) and
  each \( S^{1} \times L \), for \( L \) a complex coordinate axis, has
  one-dimensional stabiliser. Thus in \( M/T^{3} \), points with two-dimensional
  stabilisers give a discrete set, and at each such point three lines,
  corresponding to points with one-dimensional stabilisers, meet.
  We thus have a trivalent graph \( \Gamma_{M} = M^{0}/T^{3} \) in \( M/T^{3} \).

  To get the zero-tension condition, write \( (z_{1},z_{2},z_{3}) \) for the
  complex coordinates on the \( \bC^{3} \)-factor in the tubular neighbourhood.
  The stabiliser acts via \( D_{\theta,\phi} \), so the Kähler moment for the
  \( T^{2} \)-action is
  \( \mu = \tfrac{1}{2}(\abs{z_{3}}^{2} - \abs{z_{1}}^{2},\abs{z_{3}}^{2} -
  \abs{z_{2}}^{2}) \), see~\cite[Section 4.1.1]{Madsen-S:toric-G2}.
  Along each coordinate axis~\( L \) we have \( U_{1} \wedge U_{2} = 0 \), so
  \( d\nu_{3} = 0 \) and \( \nu_{3} \)~is constant on~\( L \).
  The derivatives of \( \mu \) are \( (-1,0) \), \( (0,-1) \) and \( (1,1) \)
  along the \( z_{i} \)-axis, \( i=1,2,3 \), respectively.
  As these are primitive vectors that sum to zero, we get the zero-tension
  condition under \( \mu \).
  Since \( (\nu_{1},\nu_{2}) = \pi^{*}(\mu_{1},\mu_{2}) \), we also get the zero-tension
  under~\( \nu \).
\end{proof}

As the above proof shows, by considering \( S^{1} \times B \), we get a corresponding
result for symplectic \( \SU(3) \)-structures with multi-Hamiltonian
\( T^{2} \)-action.
There is then a trivalent graph \( \Gamma_{B} \) and its image under \( \mu \)
satisfies the zero-tension condition for each vertex.

\begin{remark}
  Note that if we have a torsion-free \( G_{2} \)-structure, then the fourth
  multi-moment map~\( \hat{\nu} \) has zero derivative on~\( M^{0} \), since
  \( U_{1} \wedge U_{2} \wedge U_{3} = 0 \) at each \( p \in M^{0} \).
  Thus for each component~\( \gamma \) of~\( \Gamma_{M} \), its image
  \( (\nu,\hat{\nu})(\gamma) \subset \bR^{4}\) lies in an affine hyperplane given by
  \( \hat{\nu} \in \bR \) being constant.
  Similarly, for Calabi-Yau structures on~\( B \), we have
  \( X_{1} \wedge X_{2} = 0 \) on \( B^{0} \) and each component~\( \gamma \)
  of~\( \Gamma_{B} \) has image \( (\mu,\hat{\mu})(\gamma) \) in an affine plane on which
  \( \hat{\mu} \in \bR^{2} \) is constant.

  In particular, if the graphs are connected, then when determining their
  images, it is sufficient to work in the corresponding affine subspace, and we
  only need to focus on the effect of~\( \mu \) and \( \nu \).

  Taking \( B \) to be a product \( N \times (S^{1} \times \bR) \) of a hyperKähler
  four-manifold~\( N \) obtained from the Gibbons-Hawking Ansatz
  \cite{Gibbons-H:multi} and a cylinder \( S^{1} \times \bR = \bC / \bZ \), gives a
  Calabi-Yau structure with multi-Hamiltonian \( T^{2} \)-symmetry
  \cite{Madsen-S:multi-large}.
  In this case, \( \Gamma_{B} \)~embeds in~\( \bR^{4} \) as a collection of parallel
  straight lines, corresponding to the fixed points of the circle action
  on~\( N \).
  Thus \( \Gamma_{B} \) can have an arbitrary number of connected components.
\end{remark}

The problem we now wish to study is how graphs on a base~\( B \) for an
\( \SU(3) \)-structure lift to graphs for a \( G_{2} \)-structure on a principal
circle bundle~\( M \) over~\( B \).
We assume that \( B \) has a \( T^{2} \)-symmetry preserving the
\( \SU(3) \)-structure generated
by~\( \la{X}_{1}, \la{X}_{2} \in \LIE(T^{2}) \).
If the action lifts to a \( T^{3} \)-action on a circle bundle~\( M \)
over~\( B \), then the curvature two-form \( F \in \Omega^{2}(B) \) is
\( T^{2} \)-invariant and has integral periods.
Conversely, any such two-form specifies a principal circle bundle~\( M \) and by
Lashof, May and Segal \cite{Lashof-MS:equivariant}, the \( T^{2} \)-action on~\( B \) lifts to a
\( T^{2} \)-action on~\( M \) that commutes with the principal action.
Hence we get an effective action of~\( T^{3} \), generated by
\( \la{U}_{1},\la{U}_{2} \) with \( \pi_{*}(U_{1}) = X_{2} \) and
\( \pi_{*}(U_{2}) = -X_{1} \), and by the principal action of~\( \la{U}_{3} \).

As explained in \cite[Propositions~2.1 and 2.3]{Swann:twist}, a necessary and
sufficient condition for there to be a \( T^{2} \)-invariant connection
one-form~\( \theta \) on~\( M \) with curvature~\( F \) is that there are functions
\( \lambda_{i} \) such that
\begin{equation}
  \label{eq:lambda-i}
  d\lambda_{i} = -F(X_{i},\any),
\end{equation}
for \( i=1,2 \).
These are then \( T^{2} \)-invariant, and we get \( F(X_{1},X_{2}) = 0 \).
In other words, the \( T^{2} \)-action is Hamiltonian with respect to the
(possibly degenerate) two-form~\( F \).
The generators \( U_{1},U_{2} \) are then given by
\begin{equation*}
  \tilde{X}_{1} + \lambda_{1}U_{3} = -U_{2},\quad
  \tilde{X}_{2} + \lambda_{2}U_{3} = U_{1},
\end{equation*}
where \( \tilde{X} \in \ker \theta \) denotes the horizontal lift of~\( X \)
from~\( B \), and we have written \( \lambda_{i} \) for~\( \pi^{*}\lambda_{i} \).

\begin{remark}
  Note that \eqref{eq:lambda-i} only specifies \( \lambda_{i} \) up to an additive
  constant.
  By Lashof, May and Segal \cite{Lashof-MS:equivariant}, there is some choice so that the lifted
  action is effective.
  On the other hand, for any choice of~\( \lambda_{i} \), we get the same action
  of~\( T^{3} \), just with respect to a different real basis
  of~\( \LIE(T^{3}) \).
\end{remark}

Let \( \gamma\colon I = [0,1] \to M \) be any path.
Suppose \( X \) is a constant linear combination of \( X_{1} \) and \( X_{2} \),
so \( X \) lifts to \( U = \tilde{X} + \lambda U_{3} \) on \( M \) with
\( d\lambda = - X \hook F \).  Then we have
\begin{equation}
  \label{eq:lift-diff}
  \begin{split}
    \lambda(\gamma(1)) - \lambda(\gamma(0))
    &= \int_{t=0}^{1} \Fd{}{t} \lambda(\gamma(t)) \,dt = \int_{t=0}^{1} d\lambda(\dot{\gamma}(t)) \,dt \\
    &= - \int_{t=0}^{1} F(X,\dot{\gamma}(t)) \,dt = - \int_{t=0}^{1} F(X,\dot{\tau}(t)) \,dt,
  \end{split}
\end{equation}
for \( \tau = \pi \circ \gamma \), the projection of \( \gamma \) to~\( B \).

If \( \la{X} = r_{1}\la{X}_{1} + r_{2}\la{X}_{2} \) is an integral linear
combination of \( \la{X}_{1} \) and \( \la{X}_{2} \), then \( X \) generates a
circle action on~\( B \), via a one-parameter group~\( \varphi_{s} \), with
\( \varphi_{1} = \varphi_{0} = \Id_{B} \).
The combination of \( \varphi_{s} \) and \( \tau \) defines a, perhaps singular,
parameterised surface \( S = \sigma_{X}([0,1] \times [0,1]) \subset B \) via
\( \sigma_{\la{X}}(s,t) = \varphi_{s}(\tau(t)) \).  We may rewrite \eqref{eq:lift-diff} as
\begin{equation}
  \label{eq:lift-diff-S}
  \begin{split}
    \lambda(\tau(1)) - \lambda(\tau(0))
    &= - \int_{t=0}^{1}\int_{s=0}^{1} F(X,\dot{\tau}(t)) \,ds\,dt \\
    &= - \int_{t=0}^{1}\int_{s=0}^{1} F(\partial_{s},\partial_{t}) \,ds\,dt
    = - \int_{S} F.
  \end{split}
\end{equation}

If the endpoints \( \tau(0) \) and \( \tau(1) \) are fixed points for~\( X \), then, as
a singular \( 2 \)-chain, \( \sigma_{\la{X}} \)~is a cycle.
Equation~\eqref{eq:lift-diff-S} now reads
\begin{equation}
  \label{eq:int-diff}
  \lambda(\tau(1)) - \lambda(\tau(0)) = - [F] \cap [\sigma_{\la{X}}],
\end{equation}
where \( [F] \in H^{2}(B) \) and \( [\sigma_{\la{X}}] \in H_{2}(B,\bZ) \).
As \( F \) has integral periods, \eqref{eq:int-diff} is an integer.

Similar considerations for the Kähler moment map give
\begin{equation}
  \label{eq:mu-diff}
  \inp[\big]{\mu(\tau(1))-\mu(\tau(0))}{\la{X}} = [\omega] \cap [\sigma_{\la{X}}],
\end{equation}
when \( X \) fixes \( \tau(0) \) and~\( \tau(1) \).

Suppose now the \( \SU(3) \)-structure on~\( B \) is symplectic.
Let \( e = [\mu(b),\mu(c)] \) in \( \mu(\Gamma_{B}) \) be the image of a compact edge
in~\( \Gamma_{B} \).
The corresponding points in~\( B \) have a one-dimensional stabiliser generated
by a primitive vector \( \la{X} = r_{1}\la{X}_{1} + r_{2}\la{X}_{2} \).

Let \( \la{Z} = z_{1}\la{X}_{1} + z_{2}\la{X}_{2} \) be a choice of primitive
vector so that \( \la{X}, \la{Z} \) is a \( \bZ \)-basis for \( \LIE(T^{2}) \)
with the same orientation as \( \la{X}_{1},\la{X}_{2} \), so
\begin{equation*}
  r_{1}z_{2} - r_{2}z_{1} = 1.
\end{equation*}
Then \( Z \) is a primitive generator for the action of
\( T^{2}/\Span{\exp\la{X}} \).

Let \( \tau\colon [0,1] \to B \) be a path such that \( \mu \circ \tau \) parameterises the
line segment~\( e \) affinely: \( (\mu\circ\tau)(a) = (1-a)\mu(b) + a\mu(c) \).
Then the surface \( \cE \subset B \) corresponding to the edge~\( e \) is given by
\( T^{2}\tau \) and is generated effectively by \( Z \) and~\( \tau \).
As \( X = 0 \) on~\( \tau \), we have \( [\sigma_{\la{X}}] = 0 \) in
\( H_{2}(B,\bZ) \), so
\begin{equation*}
  r_{1}[\sigma_{\la{X}_{1}}] + r_{2}[\sigma_{\la{X}_{2}}] = 0.
\end{equation*}
We find now that
\begin{equation*}
  \begin{split}
    r_{1}[\cE]
    &= r_{1}z_{1}[\sigma_{\la{X}_{1}}] + r_{1}z_{2}[\sigma_{\la{X}_{2}}]
    = r_{1}z_{1}[\sigma_{\la{X}_{1}}] + (1+r_{2}z_{1})[\sigma_{\la{X}_{2}}] \\
    &= [\sigma_{\la{X}_{2}}] + z_{1}(r_{1}[\sigma_{\la{X}_{1}}] + r_{2}[\sigma_{\la{X}_{2}}])
    = [\sigma_{\la{X}_{2}}]
  \end{split}
\end{equation*}
and similarly
\begin{equation*}
  \begin{split}
    r_{2}[\cE]
    &= r_{2}z_{1}[\sigma_{\la{X}_{1}}] + r_{2}z_{2}[\sigma_{\la{X}_{2}}]
    = (-1+r_{1}z_{2})[\sigma_{\la{X}_{1}}] + r_{2}z_{2})[\sigma_{\la{X}_{2}}] \\
    &= -[\sigma_{\la{X}_{1}}] + z_{2}(r_{1}[\sigma_{\la{X}_{1}}] + r_{2}[\sigma_{\la{X}_{2}}])
    = -[\sigma_{\la{X}_{1}}].
  \end{split}
\end{equation*}
Using \eqref{eq:mu-diff}, we get in coordinates that
\begin{equation}
  \label{eq:mu-diff-2}
  \mu(c) - \mu(b)
  = [\omega] \cap ([\sigma_{\la{X}_{1}}],[\sigma_{\la{X}_{2}}])
  = [\omega] \cap [\cE] (-r_{2},r_{1})
\end{equation}
and similarly, for \( \lambda = (\lambda_{1},\lambda_{2}) \),
\begin{equation}
  \label{eq:lambda-diff}
  \lambda(c) - \lambda(b) = - [F] \cap ([\sigma_{\la{X}_{1}}],[\sigma_{\la{X}_{2}}])
  = - [F] \cap [\cE] (-r_{2},r_{1}).
\end{equation}

Given a smooth positive function~\( h \) on~\( B \), we define a
\( T^{3} \)-invariant \( G_{2} \)-structure on the circle bundle~\( M \)
via~\eqref{eq:G2-decomposition}.
Assume that this \( G_{2} \)-geometry has \( \varphi \)~closed.
Then the multi-moment map~\( \nu \) for~\( M \) has
\( (\nu_{1},\nu_{2}) = \pi^{*}(\mu_{1},\mu_{2}) \), up to an additive constant.
We also have
\begin{equation*}
  \begin{split}
    d\nu_{3}
    &= U_{1} \wedge U_{2} \hook \varphi \\
    &= \theta(U_{1})(\pi^{*}\omega)(U_{2},\any) - \theta(U_{2})(\pi^{*}\omega)(U_{1},\any) \eqbreak
      + (\pi^{*}\omega)(U_{1},U_{2})\theta + \pi^{*}(h^{3/4}\psi_{+})(U_{1},U_{2},\any) \\
    &= -\lambda_{2}d\mu_{1} + \lambda_{1}d\mu_{2} + \pi^{*}(\omega(X_{1},X_{2}))\theta +
      \pi^{*}(h^{3/4}\psi_{+}(X_{1},X_{2},\any)).
  \end{split}
\end{equation*}
Above \( e \) we have \( X_{1} \wedge X_{2} = 0 \), and using \eqref{eq:mu-diff-2}
gives \( d\nu = (d\nu_{1},d\nu_{2},d\nu_{3}) \) is proportional to
\begin{equation*}
  (-r_{2}, r_{1}, r_{1}\lambda_{1} + r_{2}\lambda_{2}).
\end{equation*}
Note that
\( d(r_{1}\lambda_{1}+r_{2}\lambda_{2}) = -(r_{1}X_{1}+r_{2}X_{2}) \hook F = 0 \)
along~\( e \), so we get at straight line in~\( \Lambda^{2}\LIE(T^{3})^{*} \), as
expected, with tangent vector
\( (-r_{2},r_{1},r_{1}\lambda_{1}+r_{2}\lambda_{2}) \).
Thus, for some \( t \in \bR \), we have
\begin{equation*}
  \begin{split}
    \nu(c) - \nu(b)
    &= t(-r_{2}, r_{1}, r_{1}\lambda_{1}(b) + r_{2}\lambda_{2}(b)) \\
    &= (\mu_{1}(c) - \mu_{1}(b), \mu_{2}(c) - \mu_{2}(b), \nu_{3}(c) - \nu_{3}(b)),
  \end{split}
\end{equation*}
for any \( b,c \in M \) projecting to \( b,c \in B \).
Using \eqref{eq:mu-diff-2} and the first two components, we get
\begin{equation}
  \label{eq:omega-curve}
  t = [\omega] \cap [\cE],
\end{equation}
and hence
\begin{equation}
  \label{eq:nu-diff}
  \begin{split}
    \nu_{3}(c)
    &= \nu_{3}(b) + t(r_{1}\lambda_{1}(b) + r_{2}\lambda_{2}(b)) \\
    &= \nu_{3}(b) + t(r_{1}\lambda_{1}(c) + r_{2}\lambda_{2}(c)).
  \end{split}
\end{equation}

\begin{remark}
  Note that
  \begin{equation*}
    \nu(c) - \nu(b) = (x,y,\lambda_{1}(b)y - \lambda_{2}(b)x),
  \end{equation*}
  so if \( \lambda \) is replaced by \( \lambda + \lambda^{0} \), then \( \nu \) is changed by the
  linear transformation
  \begin{equation*}
    \nu \mapsto
    \begin{pmatrix}
      1&0&0 \\
      0&1&0 \\
      -\lambda^{0}_{2}&\lambda^{0}_{1}&1
    \end{pmatrix}
    \nu.
  \end{equation*}
  When determining the lifted graphs, we may therefore choose to have
  \( \lambda = 0 \) at some basepoint.
\end{remark}

\section{Toric Calabi-Yau manifolds}
\label{sec:toric-CY}

Let us now consider a situation where we can be more concrete.
We take \( B \) to be a toric Calabi-Yau manifold, so there is an action of
\( T^{3} \) preserving the complex and symplectic structures, but not the
complex volume form.
Singular examples may be obtained by considering cones
\( C(S) = \bR_{\geqslant0} \times S \) on toric Sasaki-Einstein
manifolds~\( S \) of dimension~\( 5 \); these give smooth examples, asymptotic
to the cone, by taking a crepant resolution of the singularity
\cite{Martelli-S:toric,Futaki-OW:transverse,Cho-FO:toric-SE,%
vanCoevering:examples,vanCoevering:crepant}.
Note that there is one such Calabi-Yau solution for each Kähler class on the
resolution.

For \( C(S) \) the image of the symplectic moment map is a cone on a plane
convex polygon.
Dually this is described by the fan, which is the convex set generated by rays
\( \bR_{\geqslant0}u \) of normals~\( u \) to each face of the moment cone.
Bases may be chosen so the that each \( u \) is integral with last
coordinate~\( 1 \).
The fan is then a cone on its cross-section~\( \Delta \) at height~\( 1 \), and this
cross-section is a convex polygon that is good, meaning that no edge
of~\( \Delta \) contains primitive vectors that are not endpoints of the edge.

Smooth toric resolutions~\( B \) are obtained by triangulating~\( \Delta \), so that
each vertex~\( u_{i} \) is integral, so lies in~\( \bZ^{3} \) with last
coordinate~\( 1 \), and the vertices of each triangle form an integral basis
for~\( \bZ^{3} \).

Each ray corresponds to a divisor~\( D_{i} \) in~\( B \).
These satisfy the relation
\begin{equation}
  \label{eq:D-rel}
  \sum_{i=1}^{r} D_{i}u_{i} = 0.
\end{equation}
Indeed, the Picard group of~\( B \) is generated by the \( D_{i} \) subject to
the relations~\eqref{eq:D-rel}, see Cox, Little and Schenck \cite[Theorems 4.1.3 and 4.2.1]{Cox-LS:toric}.
By \cite[Theorem 12.3.2]{Cox-LS:toric}, this Picard group is isomorphic to the
second cohomology group~\( H^{2}(B,\bZ) \).

\subsection{Quadrilateral relations}
\label{sec:quadr-relat}

Suppose we have a quadrilateral in the fan picture with vertices
\( u_{1},u_{2},v_{1},v_{2} \) such that \( v_{1}v_{2} \) is an internal edge.
Denote the corresponding divisors by \( D_{1},D_{2},A_{1},A_{2} \).
Then \( \cC = A_{1} \cap A_{2} \) is a compact curve.
The quadrilateral is a union of two triangles \( u_{i},v_{1},v_{2} \), for
\( i=1,2 \).
From smoothness, the vectors \( u_{i},v_{1},v_{2} \) form a \( \bZ \)-basis
for~\( \bZ^{3} \), so the determinants satisfy
\( \det(u_{i}\; v_{1}\; v_{2}) = -\varepsilon_{i} \)
with~\( \varepsilon_{i} \in \Set{\pm1} \).
We may change the basis of~\( \bZ^{2} \leqslant \bZ^{3} \), so that
\( v_{1} = (0,0,1) \) and \( v_{2} = (0,1,1) \).
Then the values of the determinants give
\( u_{i} = (\varepsilon_{i},a_{i},1) \), for some \( a_{i} \in \bZ \).
As \( u_{1} \) and \( u_{2} \) lie on different sides of \( v_{1}v_{2} \), we
have \( \varepsilon_{1} = -\varepsilon_{2} \).
Swapping \( u_{1} \) and \( u_{2} \), we may assume \( \varepsilon_{1} = 1 \).
The four integral vectors \( u_{i},v_{j} \) are linearly dependent, so there
exist integers \( x_{i},y_{j} \in \bZ \) such that
\begin{equation*}
    0
    = x_{1}u_{1} + x_{2}u_{2} + y_{1}v_{1} + y_{2}v_{2}
    =
      \begin{pmatrix}
        x_{1} - x_{2} \\
        x_{1}a_{1} + x_{2}a_{2} + y_{2} \\
        x_{1} + x_{2} + y_{1} + y_{2}
      \end{pmatrix}
      .
\end{equation*}
We get \( x_{2} = x_{1} \) and this common value divides \( y_{2} \), by the
second equation, and the hence also \( y_{1} \), by the final component.
We may thus divide through by \( x_{1} \) and get the quadrilateral relations
\begin{equation}
  \label{eq:quadrilateral}
  u_{1} + u_{2} + y_{1}v_{1} + y_{2}v_{2} = 0,\qquad y_{1} + y_{2} = -2.
\end{equation}
See Cox, Little and Schenck \cite[Equation 6.4.4]{Cox-LS:toric}.

Divisors \( D \) outside of the quadrilateral do not meet~\( \cC \), so the
corresponding triple intersections \( DA_{1}A_{2} \) are zero.
By \cite[Corollary 6.4.3 and Proposition 6.4.4]{Cox-LS:toric},
the relations \eqref{eq:quadrilateral} give
\begin{equation}
  \label{eq:quad-triple}
  D_{1}A_{1}A_{2} = 1 = D_{2}A_{1}A_{2},\quad A_{1}^{2}A_{2} = y_{1},\quad
  A_{1}A_{2}^{2} = y_{2}.
\end{equation}

\subsection{Closed polygons}
\label{sec:closed-polygons}

Suppose \( R_{E} = \bR_{\geqslant 0} u_{E} \) is an interior ray, with corresponding
(compact) divisor~\( E \).
The cones containing \( R_{E} \) are spanned by \( u_{E} \) together with
vectors \( u_{1},\dots,u_{m} \) that we may order anti-clockwise as in
Figure~\ref{fig:polygon}.
Write \( A_{j} \) for the divisor corresponding to~\( u_{j} \), for
\( j=1,\dots,m \).  See Example~\ref{ex:hex}, below, for an explicit example.

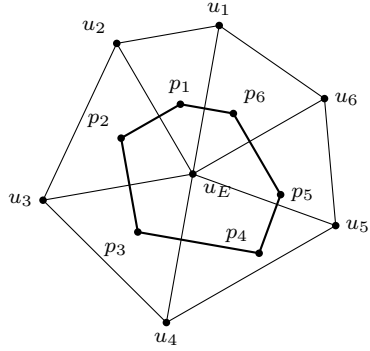
\begin{figure}
  \centering
  \begin{tikzpicture}[vertex/.style={circle,inner sep=1pt,fill},font={\scriptsize}]
    \coordinate (E) at (0,0);
    \coordinate (A1) at (80:2cm);
    \coordinate (A2) at (120:2cm);
    \coordinate (A3) at (190:2cm);
    \coordinate (A4) at (260:2cm);
    \coordinate (A5) at (-20:2cm);
    \coordinate (A6) at (30:2cm);
    \draw (E) node[vertex] {} node[below right] {\( u_{E} \)};
    \draw (A1) node[vertex] {} node[above] {\( u_{1} \)}
    -- (A2) node[vertex] {} node[above left] {\( u_{2} \)}
    -- (A3) node[vertex] {} node[left] {\( u_{3} \)}
    -- (A4) node[vertex] {} node[below] {\( u_{4} \)}
    -- (A5) node[vertex] {} node[right] {\( u_{5} \)}
    -- (A6) node[vertex] {} node[right] {\( u_{6} \)}
    -- cycle;
    \draw (E) edge (A1)
    edge (A2)
    edge (A3)
    edge (A4)
    edge (A5)
    edge (A6);

    \coordinate (P1) at ($(A1)!.5!(A2)!.5!(E)$);
    \coordinate (M2) at ($(A2)!(P1)!(E)$);
    \coordinate (P2) at ($(P1)!2.8!(M2)$);
    \coordinate (M3) at ($(A3)!(P2)!(E)$);
    \coordinate (P3) at ($(P2)!2!(M3)$);
    \coordinate (M4) at ($(A4)!(P3)!(E)$);
    \coordinate (P4) at ($(P3)!2.8!(M4)$);
    \coordinate (M5) at ($(A5)!(P4)!(E)$);
    \coordinate (P5) at ($(P4)!1.2!(M5)$);
    \coordinate (M6) at ($(A6)!(P5)!(E)$);
    \coordinate (M1) at ($(A1)!(P1)!(E)$);
    \path [name path=up] (P5) -- ($(P5)!5!(M6)$);
    \path [name path=right] (P1) -- ($(P1)!5!(M1)$);
    \draw[name intersections={of=up and right},thick]
    (P1) node[vertex] {} node[above] {\( p_{1} \)}
    -- (P2) node[vertex] {} node[above left] {\( p_{2} \)}
    -- (P3) node[vertex] {} node[below left] {\( p_{3} \)}
    -- (P4) node[vertex] {} node[above left] {\( p_{4} \)}
    -- (P5) node[vertex] {} node[right] {\( p_{5} \)}
    -- (intersection-1) node[vertex] {} node[above right] {\( p_{6} \)} -- cycle;
  \end{tikzpicture}
  \caption{A polygon \( p_{1}\dots p_{6} \) in toric picture from a fan
  \( u_{1}\dots u_{6}u_{E} \) at level~\( 1 \).}
  \label{fig:polygon}
\end{figure}

The edges in the corresponding image of the moment map are orthogonal to
differences \( u_{j} - u_{E} \), \( u_{j} - u_{k} \).
As all \( u_{i} \) have third coordinate~\( 1 \), we may regard these
differences as vectors in~\( \bZ^{2} \subset \bR^{2} \).
We may thus write the directions of the first set of edges as
\( J_{2}(u_{i}-u_{E}) \), where \( J_{2} =
\begin{psmallmatrix}
  0&-1\\
  1&0
\end{psmallmatrix}
\).
Let \( p_{j} = \mu(q_{j}) \) be the point in the image of the moment map
corresponding to the cone generated by \( u_{E}, u_{j}, u_{j+1} \), reading
indices modulo \( m \) where necessary.  By~\eqref{eq:mu-diff-2}, we have
\begin{equation*}
  p_{j} - p_{j-1} = t_{j} J_{2}(u_{j}-u_{E}),\qquad t_{j} = [\omega] \cap [\cE_{j}],
\end{equation*}
where \( \cE_{j} = A_{j} \cap E \).

\begin{proposition}
  \label{prop:closed-moment-polygon}
  \( p_{1},\dots,p_{m} \) are the vertices of a closed polygon with
  \( p_{m+1} = p_{1} \).
\end{proposition}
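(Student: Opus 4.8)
The plan is to reduce the statement to a single identity among the fan vectors and then verify that identity by toric intersection theory on the compact divisor~\( E \). Telescoping the relations \( p_{j}-p_{j-1}=t_{j}J_{2}(u_{j}-u_{E}) \) over one full period (all indices read modulo~\( m \)) shows that \( p_{m+1}=p_{1} \) holds if and only if
\[
  \sum_{j=1}^{m} t_{j}\,(u_{j}-u_{E}) = 0 \qquad\text{in } \bR^{3},
\]
where \( t_{j} = [\omega] \cap [\cE_{j}] \); this is the identity I would aim to establish.

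First I would record the toric structure of~\( E \). Since \( B \) is a smooth resolution, the star of the interior ray~\( R_{E} \) is the complete smooth fan of the compact toric surface~\( E \): its rays are the images \( \bar u_{1},\dots,\bar u_{m} \) of \( u_{1},\dots,u_{m} \) in \( N/\bZ u_{E}\cong\bZ^{2} \), cyclically ordered as in Figure~\ref{fig:polygon}, and the torus-invariant curves of~\( E \) are precisely the \( \cE_{j}=A_{j}\cap E \). Smoothness of the two-dimensional cones \( \langle u_{E},u_{j},u_{j+1}\rangle \) gives \( \cE_{j}\cdot\cE_{j\pm 1}=1 \), while the standard wall relation for a smooth complete toric surface, see Cox, Little and Schenck \cite{Cox-LS:toric}, reads
\[
  \bar u_{j-1} + \bar u_{j+1} + (\cE_{j}^{2})\,\bar u_{j} = 0 .
\]

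Then I would compute \( \sum_{j} t_{j}\bar u_{j} \) in \( N_{\bR}/\bR u_{E} \). Restricting \( \omega \) to~\( E \) gives \( t_{j}=\int_{\cE_{j}}\omega=[\omega|_{E}]\cap[\cE_{j}] \), an intersection pairing on~\( E \); since \( H^{2}(E,\bR)=\operatorname{Pic}(E)_{\bR} \) is spanned by the classes \( [\cE_{k}] \), it is enough to observe that the linear map \( \alpha\mapsto\sum_{j}(\alpha\cdot\cE_{j})\,\bar u_{j} \) annihilates each \( [\cE_{k}] \), and this is immediate from the wall relation together with \( \cE_{k}\cdot\cE_{k\pm 1}=1 \):
\[
  \sum_{j}(\cE_{k}\cdot\cE_{j})\,\bar u_{j}=(\cE_{k}^{2})\,\bar u_{k}+\bar u_{k-1}+\bar u_{k+1}=0 .
\]
Hence \( \sum_{j} t_{j}u_{j}=c\,u_{E} \) for some \( c\in\bR \), and comparing the last coordinates, using that every \( u_{i} \) has last coordinate~\( 1 \), forces \( c=\sum_{j} t_{j} \). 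Therefore \( \sum_{j} t_{j}(u_{j}-u_{E})=0 \), which is exactly what is needed.

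I expect the main obstacle to be purely combinatorial bookkeeping: one must check that the star of~\( u_{E} \) is indexed by \( u_{1},\dots,u_{m} \) in the same cyclic order as the vertices \( p_{1},\dots,p_{m} \), so that consecutive \( p_{j-1},p_{j} \) are indeed joined by the image of~\( \cE_{j} \) with the orientation prescribed by~\eqref{eq:mu-diff-2}; once this is in place the intersection-theoretic identity is forced. A more conceptual route avoids the computation entirely: \( (E,\omega|_{E}) \) is a compact symplectic toric surface and \( \mu|_{E} \) is, up to a linear isomorphism of~\( \bR^{2} \), a moment map for the induced \( T^{2} \)-action, so by Atiyah and Guillemin--Sternberg its image \( \mu(E) \) is a convex polygon whose vertices, read cyclically, are exactly \( p_{1},\dots,p_{m} \).
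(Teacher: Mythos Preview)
Your argument is correct. You and the paper both reduce the Proposition to the identity \( \sum_{j=1}^{m} t_{j}(u_{j}-u_{E}) = 0 \), and both ultimately rely on the quadrilateral/wall relation \eqref{eq:quad-uE}. The difference is in how the identity is verified. The paper expands \( t_{j} = w_{j-1}+w_{j+1}+a_{j}w_{j}+b_{j}w_{E} \) directly from the triple-intersection formulae \eqref{eq:quad-triple}, substitutes into the sum, and collects the coefficients of each \( w_{j} \) and of \( w_{E} \), each of which vanishes by~\eqref{eq:quad-uE}. You instead work intrinsically on the compact toric surface~\( E \): observing that the linear map \( \alpha\mapsto\sum_{j}(\alpha\cdot\cE_{j})\bar u_{j} \) kills every generator \( [\cE_{k}] \) of \( H^{2}(E,\bR) \) by the wall relation, you conclude \( \sum_{j}t_{j}u_{j}\equiv 0 \bmod u_{E} \), and then the common last coordinate~\( 1 \) fixes the multiple of~\( u_{E} \). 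Your route is a bit more conceptual and avoids ever writing \( [\omega] \) in a basis of divisors on~\( B \); the paper's route is more hands-on and has the advantage of reusing exactly the same bookkeeping later in the proof of Theorem~\ref{thm:lift-closes}, where the analogous expansion of \( s_{j} \) and the coefficients \( f_{j} \) is needed. Your closing remark about Atiyah--Guillemin--Sternberg is also a valid shortcut, though it imports more than the paper's self-contained computation.
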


For the quadrilateral \( u_{j-1},u_{j+1},u_{j},u_{E} \),
equation~\eqref{eq:quadrilateral} gives
\begin{equation}
  \label{eq:quad-uE}
  u_{j-1} + u_{j+1} + a_{j}u_{j} + b_{j}u_{E} = 0,\qquad a_{j} + b_{j} = -2.
\end{equation}
Writing \( [\omega] = \sum_{i=1}^{r} w_{D_{i}}D_{i} \) and \( w_{j} = w_{A_{j}} \),
we have
\begin{equation*}
  t_{j} = \sum_{i=1}^{r} w_{D_{i}} D_{i}A_{j}E
  = w_{j-1} + w_{j+1} + a_{j}w_{j} + b_{j}w_{E},
\end{equation*}
by \eqref{eq:quad-triple}.
Now Proposition~\ref{prop:closed-moment-polygon} will follow directly from

\begin{lemma}
  \label{lem:t-u}
  In the above notation, we have
  \begin{equation*}
    \sum_{j=1}^{m} t_{j}(u_{j} - u_{E}) = 0.
  \end{equation*}
\end{lemma}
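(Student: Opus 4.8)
The plan is to exploit the fact that both the vectors \( u_j - u_E \) and the scalars \( t_j \) are governed by one and the same three-term linear recurrence around the cycle, so that the required sum collapses by a discrete integration by parts.

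First I would rewrite the quadrilateral relation~\eqref{eq:quad-uE} in terms of \( v_j := u_j - u_E \). Subtracting \( (2+a_j)u_E \) from \( u_{j-1}+u_{j+1}+a_j u_j = -b_j u_E \) and using \( a_j+b_j=-2 \) yields the homogeneous recurrence
\[
  v_{j-1} + v_{j+1} + a_j v_j = 0, \qquad j = 1,\dots,m,
\]
with indices read modulo~\( m \). Next I would observe that the same bookkeeping applies to the coefficients of~\( [\omega] \): writing \( \hat w_j := w_j - w_E \), the formula \( t_j = w_{j-1}+w_{j+1}+a_j w_j + b_j w_E \) becomes, again by \( a_j+b_j=-2 \),
\[
  t_j = \hat w_{j-1} + \hat w_{j+1} + a_j \hat w_j .
\]

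The last step is a summation by parts. Substituting the previous line into \( \sum_j t_j v_j \) and shifting the summation index in the two translated sums — replacing \( j \) by \( j+1 \) in the term carrying \( \hat w_{j-1} \) and by \( j-1 \) in the term carrying \( \hat w_{j+1} \), which introduces no boundary contributions since the index set is cyclic — moves all translations onto the \( v \)'s:
\[
  \sum_{j=1}^m t_j (u_j - u_E) = \sum_{j=1}^m \bigl(\hat w_{j-1} + \hat w_{j+1} + a_j \hat w_j\bigr) v_j = \sum_{j=1}^m \hat w_j \bigl(v_{j+1} + v_{j-1} + a_j v_j\bigr) = 0
\]
by the homogeneous recurrence. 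The only points deserving care are preliminary rather than substantial: that each edge \( u_j u_E \) is an interior edge of the triangulation, so that~\eqref{eq:quad-uE} is available for every~\( j \), which holds because \( R_E \) is an interior ray; and that only the divisors \( A_{j-1}, A_{j+1}, A_j, E \) meeting \( \cE_j \) contribute to \( t_j \), so that the expression \( t_j = w_{j-1}+w_{j+1}+a_j w_j + b_j w_E \) follows from~\eqref{eq:quad-triple} applied to the quadrilateral \( u_{j-1},u_{j+1},u_j,u_E \). There is no genuine obstacle here; the real content of the lemma is the \emph{duality} between \( u_j - u_E \) and \( t_j \) under the common recurrence, and the rest is routine.
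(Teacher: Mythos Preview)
Your argument is correct and is essentially the paper's proof, repackaged: both expand \(t_j\) via the quadrilateral relation and then use cyclic index shifts (your summation by parts is exactly the paper's ``collect the coefficient of each \(w_j\)''), with the only cosmetic difference that your substitution \(\hat w_j = w_j - w_E\) absorbs the \(w_E\)-term into the same homogeneous recurrence rather than treating it separately as the paper does. The duality you highlight between \(v_j\) and \(\hat w_j\) under the common recurrence \(x_{j-1}+x_{j+1}+a_j x_j=0\) is precisely the mechanism behind the paper's computation, just made explicit.
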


\begin{proof}
  We first compute
  \begin{equation*}
    \begin{split}
      \sum_{j=1}^{m} t_{j}(u_{j} - u_{E})
      &= \sum_{j=1}^{m} (w_{j-1} + w_{j+1} + a_{j}w_{j} + b_{j}w_{E})u_{j} \eqbreak
        - \sum_{j=1}^{m} ((2+a_{j})w_{j} + b_{j}w_{E})u_{E}.
    \end{split}
  \end{equation*}
  The coefficient of \( w_{j} \) is
  \( u_{j+1} + u_{j-1} + a_{j}u_{j} - (2+a_{j})u_{E} \), which is zero, since
  \( b_{j} = -(2+a_{j}) \).  The coefficient of \( w_{E} \) is
\begin{equation*}
  \begin{split}
        \sum_{j=1}^{m} b_{j}u_{j} - b_{j}u_{E}
      &= \sum_{j=1}^{m} -(2+a_{j})u_{j} - b_{j}u_{E} \\
        &= - \sum_{j=1}^{m} u_{j-1} + u_{j+1} + a_{j}u_{j} + b_{j}u_{E}
      = 0,
    \end{split}
  \end{equation*}
  and the assertion follows.
\end{proof}

In this context, we now have the following interpretation of the condition
\( [\omega] \cup [F] = 0 \) of Foscolo, Haskins and Nordström \cite{Foscolo-HN:from-CY}.

\begin{theorem}
  \label{thm:lift-closes}
  The lifted polygon closes, so \( \nu_{3}(q_{m+1}) = \nu_{3}(q_{1}) \), if and only
  \( [\omega] \cup [F] = 0 \).
\end{theorem}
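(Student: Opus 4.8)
The plan is to compute the total increment \(\nu_{3}(q_{m+1}) - \nu_{3}(q_{1})\) of \(\nu_{3}\) around the polygon directly from the per-edge formulas of Section~\ref{sec:graph-lifting}, and to recognise the answer as \(-[\omega]\cup[F]\) evaluated on the fundamental class of \(E\). Along the edge \(\cE_{j} = A_{j}\cap E\) the stabiliser is generated by the primitive vector \(u_{j}-u_{E}\) (its last coordinate is \(0\), so it is read in \(\bR^{2}\)), and the parameter of \eqref{eq:omega-curve} is \(t_{j} = [\omega]\cap[\cE_{j}]\). Thus \eqref{eq:nu-diff} gives \(\nu_{3}(q_{j})-\nu_{3}(q_{j-1}) = t_{j}\,\langle\lambda(q_{j-1}),\,u_{j}-u_{E}\rangle\), with \(\lambda = (\lambda_{1},\lambda_{2})\) the function of \eqref{eq:lambda-i} (genuinely defined on \(B\) when \(b_{1}(B)=0\)), and summing cyclically over \(j=1,\dots,m\) (so \(q_{0}=q_{m}\)),
\[
  \nu_{3}(q_{m+1}) - \nu_{3}(q_{1}) = \sum_{j=1}^{m} t_{j}\,\langle\lambda(q_{j-1}),\,u_{j}-u_{E}\rangle .
\]

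Next I would use \eqref{eq:lambda-diff} in the form \(\lambda(q_{j})-\lambda(q_{j-1}) = -f_{j}\,J_{2}(u_{j}-u_{E})\), where \(f_{j} = [F]\cap[\cE_{j}]\), to telescope \(\lambda(q_{j-1}) = \lambda(q_{m}) - \sum_{k<j} f_{k}\,J_{2}(u_{k}-u_{E})\). Substituting, the contribution of \(\lambda(q_{m})\) is \(\langle\lambda(q_{m}),\,\sum_{j} t_{j}(u_{j}-u_{E})\rangle\), which vanishes by Lemma~\ref{lem:t-u}; in particular the increment is independent of the additive freedom in \(\lambda\), as the final Remark predicts. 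Using \(\langle J_{2}v,w\rangle = \det(v,w)\), what is left is the combinatorial quantity
\[
  \nu_{3}(q_{m+1}) - \nu_{3}(q_{1}) = -\sum_{1\le k<j\le m} t_{j}\,f_{k}\,\det\bigl(u_{k}-u_{E},\,u_{j}-u_{E}\bigr).
\]

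The crux is then to identify this with \(-\int_{E}\omega\wedge F = -\langle[\omega]\cup[F],[E]\rangle\). For this I would work on the smooth compact toric surface \(E\); write \(v_{j}\) for \(u_{j}-u_{E}\). The fan of \(E\) has rays the images \(\bar u_{j}\) of the \(u_{j}\) (equivalently of the \(v_{j}\)), its torus-invariant curves are the \(\cE_{j}\), and the quadrilateral relations \eqref{eq:quad-uE} descend to the wall-crossing relations \(v_{j-1}+v_{j+1}+a_{j}v_{j}=0\); hence the intersection matrix \(Q_{jk} = \cE_{j}\cdot_{E}\cE_{k}\) is cyclic tridiagonal with \(Q_{jj}=a_{j}\) and \(Q_{j,j\pm1}=1\) (using \eqref{eq:quad-triple} for the surface), while \eqref{eq:D-rel} on \(E\) gives \(\sum_{j} Q_{bj}v_{j} = 0\). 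Since \(t_{j} = [\omega]|_{E}\cdot_{E}\cE_{j}\) and \(f_{j} = [F]|_{E}\cdot_{E}\cE_{j}\), writing \([\omega]|_{E} = \sum_{a}c_{a}\cE_{a}\) and \([F]|_{E} = \sum_{b}d_{b}\cE_{b}\) reduces the claim to the identity \(\sum_{k<j} Q_{aj}Q_{bk}\det(v_{k},v_{j}) = Q_{ab}\). This follows quickly by introducing the partial sums \(P_{b}(\ell) = \sum_{k\le\ell}Q_{bk}v_{k}\): the relation on \(E\) forces \(P_{b}(m)=0\), and together with the wall-crossing relation it gives \(P_{b}(\ell)=0\) except for \(P_{b}(b-1)=v_{b-1}\) and \(P_{b}(b)=-v_{b+1}\); feeding this back into \(\sum_{j}Q_{aj}\det(P_{b}(j-1),v_{j})\) and using \(\det(v_{j-1},v_{j}) = \det(u_{E}\;u_{j-1}\;u_{j}) = 1\) (smoothness plus the anticlockwise ordering) settles the identity after a short case check on \(\lvert a-b\rvert\). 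Therefore \(\nu_{3}(q_{m+1}) - \nu_{3}(q_{1}) = -\langle[\omega]\cup[F],[E]\rangle\), so the lifted polygon closes precisely when this pairing vanishes; letting \(E\) range over the compact toric divisors, which span \(H_{4}(B)\), this is exactly the condition \([\omega]\cup[F]=0\) in \(H^{4}(B)\).

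I expect the identification step to be the real obstacle; the earlier manipulations are mechanical. The same outcome can also be derived more conceptually: on the graph one has \(X_{1}\wedge X_{2}=0\), so the formula for \(d\nu_{3}\) collapses to \(d\nu_{3} = \lambda_{1}\,d\mu_{2}-\lambda_{2}\,d\mu_{1}\); integrating this around the loop, filling it with a \(2\)-chain in \(E\), and applying Stokes turns the integrand into \(d\lambda_{1}\wedge d\mu_{2}-d\lambda_{2}\wedge d\mu_{1} = -\iota_{X_{1}}\iota_{X_{2}}(F\wedge\omega)\) (using \(F(X_{1},X_{2})=\omega(X_{1},X_{2})=0\)), and integrating out the \(T^{2}\)-fibres of \(E\) recovers \(-\int_{E}F\wedge\omega\). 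Either way the delicate point is the same — matching the explicit local/combinatorial data on the polygon with the global intersection number on \(E\): in the combinatorial route it is the \(Q\)-bookkeeping above, and in the Stokes route it is the reduction over the torus fibres together with the orientation and holonomy care needed to lift the loop.
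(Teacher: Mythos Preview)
Your argument is correct and reaches the same endpoint, but the identification step is organised differently from the paper's. After reducing to the cyclic sum you write (beware: your \(f_{k}\) is the paper's \(s_{k}=[F]\cap[\cE_{k}]\), not the Picard coefficient \(f_{A_{k}}\) used throughout Section~\ref{sec:toric-CY}, so the notations clash), you pass to the compact toric surface~\(E\) and prove the identity \(\sum_{k<j}Q_{aj}Q_{bk}\det(v_{k},v_{j})=Q_{ab}\) via the partial sums \(P_{b}(\ell)\); this works, but the boundary cases \(b\in\{1,m\}\) genuinely behave differently from your description (there \(P_{b}\) is non-zero on a long range of indices, not just two), so the ``short case check'' does need to be written out. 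The paper instead stays in~\(B\): it keeps \([\omega],[F]\) in Picard coordinates, produces a closed form \(\lambda(q_{j})=\lambda(q_{1})+J_{2}(d_{j}-d_{1})\) with \(d_{j}\) linear in the~\(f_{i}\), and arrives at the much tidier intermediate expression \(\nu_{3}(q_{m+1})-\nu_{3}(q_{1})=\sum_{j}t_{j}(f_{j}-f_{E})\), which is then matched with \(([\omega]\cup[F]).E\) by a direct triple-intersection computation in~\(B\) (including evaluating~\(E^{3}\)). Your route is more intrinsic to the surface~\(E\) and sidesteps the ambient self-intersection~\(E^{3}\); the paper's route yields a cleaner formula in which the special role of the coefficient~\(f_{E}\) is visible. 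Your Stokes alternative is also correct and is arguably the most conceptual of the three; the only genuinely delicate point there is the fibre-integration over the \(T^{2}\)-orbits in~\(E\), together with the orientation bookkeeping.
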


The change in \( \nu_{3} \) is given by \eqref{eq:nu-diff}, so
\begin{equation}
  \label{eq:nu-q-diff}
  \nu_{3}(q_{j}) - \nu_{3}(q_{j-1}) = t_{j} \inp{u_{j}-u_{E}}{\lambda(q_{j})}.
\end{equation}
So we need to compute \( \lambda(q_{j}) \).
As for \( [\omega] \), we write \( [F] = \sum_{i=1}^{r}f_{D_{i}}D_{i} \) and
\( f_{j} = f_{A_{j}} \).  From \eqref{eq:lambda-diff}, we have
\begin{equation}
  \label{eq:lambda-J}
  \lambda(q_{j}) - \lambda(q_{j-1}) = - s_{j} J_{2}(u_{j} - u_{E}),
\end{equation}
where \( s_{j} = f_{j-1} + f_{j+1} + a_{j}f_{j} + b_{j}f_{E} \).

\begin{lemma}
  Putting
  \( d_{j} = f_{E}(u_{j} - u_{j+1}) + f_{j}(u_{j+1} - u_{E}) - f_{j+1}(u_{j} -
  u_{E}) \), we have
  \begin{equation*}
    \lambda(q_{j}) - \lambda(q_{1}) = J_{2} (d_{j} - d_{1}).
  \end{equation*}
\end{lemma}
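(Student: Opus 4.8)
The plan is to integrate the recursion \eqref{eq:lambda-J} by recognising \( J_{2}d_{j} \) as a discrete primitive of its right-hand side. Concretely, I would reduce the lemma to the single vector identity
\begin{equation*}
  d_{j} - d_{j-1} = - s_{j}(u_{j} - u_{E}),
\end{equation*}
where the indices are read modulo~\( m \). Granting this, applying \( J_{2} \) and summing over \( k = 2,\dots,j \) telescopes both sides: the left gives \( J_{2}(d_{j} - d_{1}) \), while the right, by \eqref{eq:lambda-J}, gives \( \sum_{k=2}^{j}(\lambda(q_{k}) - \lambda(q_{k-1})) = \lambda(q_{j}) - \lambda(q_{1}) \), which is exactly the claim.

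To prove the displayed identity I would expand \( d_{j} - d_{j-1} \) straight from the definition of~\( d_{j} \), writing \( d_{j-1} \) with all indices shifted down by one, and collect terms by their coefficient among \( f_{E}, f_{j}, f_{j-1}, f_{j+1} \); this yields
\begin{equation*}
  d_{j} - d_{j-1} = f_{E}(2u_{j} - u_{j-1} - u_{j+1}) + f_{j}(u_{j-1} + u_{j+1} - 2u_{E}) - (f_{j-1} + f_{j+1})(u_{j} - u_{E}).
\end{equation*}
The point is now to feed in the quadrilateral relation \eqref{eq:quad-uE} for the quadrilateral \( u_{j-1}, u_{j+1}, u_{j}, u_{E} \), namely \( u_{j-1} + u_{j+1} = -a_{j}u_{j} - b_{j}u_{E} \) together with \( b_{j} = -2 - a_{j} \). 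Substituting these, one gets \( 2u_{j} - u_{j-1} - u_{j+1} = (2+a_{j})u_{j} + b_{j}u_{E} = -b_{j}(u_{j} - u_{E}) \) and \( u_{j-1} + u_{j+1} - 2u_{E} = -a_{j}u_{j} - (b_{j}+2)u_{E} = -a_{j}(u_{j} - u_{E}) \), so every bracket becomes a multiple of \( u_{j} - u_{E} \) and the expression collapses to \( -(b_{j}f_{E} + a_{j}f_{j} + f_{j-1} + f_{j+1})(u_{j} - u_{E}) = -s_{j}(u_{j} - u_{E}) \), since \( s_{j} = f_{j-1} + f_{j+1} + a_{j}f_{j} + b_{j}f_{E} \) by \eqref{eq:lambda-J}.

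There is no real obstacle here: the only delicate part is bookkeeping — one must invoke \eqref{eq:quad-uE} in exactly the normalisation given and keep the cyclic indexing consistent — so this step, rather than being difficult, is simply where care is needed. The genuine content is the up-front observation that \( d_{j} \) is the correct discrete antiderivative; once the identity \( d_{j} - d_{j-1} = -s_{j}(u_{j} - u_{E}) \) is established, the lemma is a one-line telescoping argument.
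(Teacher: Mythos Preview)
Your proposal is correct and follows essentially the same route as the paper: both reduce the claim to the incremental identity \(d_{j}-d_{j-1}=-s_{j}(u_{j}-u_{E})\), verify it by expanding \(d_{j}-d_{j-1}\) and substituting the quadrilateral relation \eqref{eq:quad-uE}, and then telescope. The only cosmetic difference is that the paper phrases the reduction as ``it is enough to show \(\lambda(q_{j})-\lambda(q_{j-1})=J_{2}(d_{j}-d_{j-1})\)'' before unwinding, whereas you state the \(s_{j}\)-identity first and apply \(J_{2}\) afterwards.
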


\begin{proof}
  It is enough to show \( \lambda(q_{j}) - \lambda(q_{j-1}) = J_{2}(d_{j} - d_{j-1}) \).  We
  have
  \begin{equation*}
    J_{2}(\lambda(q_{j}) - \lambda(q_{j-1}))
    = s_{j}(u_{j} - u_{E}) = (f_{j-1} + f_{j+1} + a_{j}f_{j} +
    b_{j}f_{E})(u_{j}-u_{E}).
  \end{equation*}
  On the other hand
  \begin{equation*}
    \begin{split}
    d_{j} - d_{j-1}
      &= f_{E}(-u_{j-1} + 2u_{j} - u_{j+1})
        + f_{j}(u_{j+1} + u_{j-1} - 2u_{E}) \eqbreak
        - (f_{j+1} + f_{j-1})(u_{j} - u_{E})\\
        &= f_{E}((2+a_{j})u_{j} + b_{j}u_{E})
        + f_{j}(-a_{j}u_{j} - (2+b_{j})u_{E}) \eqbreak
        - (f_{j+1} + f_{j-1})(u_{j} - u_{E})\\
      &= (b_{j}f_{E} + f_{j-1}+f_{j+1} + a_{j}f_{j})(u_{E} - u_{j})
      = - J_{2}(\lambda(q_{j}) - \lambda(q_{j-1})),
    \end{split}
  \end{equation*}
  and the result follows.
\end{proof}

Now \eqref{eq:nu-q-diff} gives
\begin{equation*}
  \nu_{3}(q_{m+1}) - \nu_{3}(q_{1})
  = \sum_{j=1}^{m} t_{j} \inp{u_{j} - u_{E}}{J_{2}d_{j} + J_{2}(d_{1}) + \lambda(q_{1})}.
\end{equation*}
As the last two terms are independent of~\( j \), Lemma~\ref{lem:t-u} implies that,
after summing, their contribution is zero.  So
\begin{equation}
  \label{eq:nu-three-diff-poly}
  \begin{split}
    \MoveEqLeft[0]
    \nu_{3}(q_{m+1}) - \nu_{3}(q_{1})
    = \sum_{j=1}^{m} t_{j} \inp{u_{j} - u_{E}}{J_{2}d_{j}} \\
    &= \sum_{j=1}^{m} t_{j} \inp[\big]{u_{j} - u_{E}}{J_{2}\paren[\big]{
      (f_{E}-f_{j+1})(u_{j} - u_{E})
      + (f_{j}- f_{E})(u_{j-1} - u_{E})}} \\
    &= \sum_{j=1}^{m} t_{j}(f_{j}-f_{E}) \inp{u_{j}-u_{E}}{J_{2}(u_{j-1}-u_{E})}
    = \sum_{j=1}^{m} t_{j}(f_{j} - f_{E}),
  \end{split}
\end{equation}
since \( u_{E},u_{j-1},u_{j} \) is a positively oriented \( \bZ \)-basis.

On the other hand, \( [\omega] \cup [F] = 0 \) if and only if it pairs to zero on each
compact divisor.  We compute
\begin{equation*}
  \begin{split}
    ([\omega] \cup [F]).E
    &= \paren{w_{E}E + \sum_{j=1}^{m} w_{j}A_{j}}
      \paren{f_{E}E + \sum_{j=1}^{m} f_{j}A_{j}}
      E \\
    &= w_{E}f_{E}E^{3} + \sum_{j=1}^{m} (w_{j}f_{E} + w_{E}f_{j}) A_{j}E^{2}
      + \sum_{j,k=1}^{m} w_{j}f_{k}A_{j}A_{k}E.
  \end{split}
\end{equation*}
We know the values of the triple intersections apart from~\( E^{3} \).
But this may be computed by noting that the last component of \eqref{eq:D-rel}
gives \( E = -\sum_{D_{i} \ne E} D_{i} \), so
\( E^{3} = -\sum_{j=1}^{m} A_{j}E^{2} = - \sum_{j=1}^{m} b_{j} \).  Now
\begin{equation*}
  \begin{split}
    \MoveEqLeft ([\omega] \cup [F]).E \\
    &= -w_{E}f_{E}\sum_{j=1}^{m}b_{j} + \sum_{j=1}^{m} b_{j}(w_{j}f_{E}+w_{E}f_{j})
          + \sum_{j=1}^{m} w_{j}f_{j-1} + w_{j}f_{j+1} + a_{j}w_{j}f_{j}\\
    &= f_{E} \sum_{j=1}^{m} b_{j}(w_{j}-w_{E}) + \sum_{j=1}^{m} t_{j}f_{j}
    = -f_{E} \sum_{j=1}^{m} (2+a_{j})w_{j} + b_{j}w_{E}) + \sum_{j=1}^{m}
      t_{j}f_{j}\\
    &= \sum_{j=1}^{m} t_{j}(f_{j} - f_{E}).
  \end{split}
\end{equation*}
Thus \( ([\omega] \cup [F]).E \) agrees with \eqref{eq:nu-three-diff-poly}, completing
the proof of Theorem~\ref{thm:lift-closes}.

\section{Examples}
\label{sec:examples}

We illustrate the above theory with three examples.
The first does not require the machinery of Section~\ref{sec:toric-CY} and is tackled
more directly.

\begin{example}[The spaces \( M_{m,n} \)]
  \label{ex:Mmn}
  Let \( B = K \) be the canonical bundle of \( \CP(1)_{+} \times \CP(1)_{-} \).
  Following Foscolo, Haskins and Nordström \cite{Foscolo-HN:G2TNEH}, \( M = M_{m,n} \) denotes the total
  space over~\( K \) of the circle bundle with first Chern class
  \begin{equation*} %
    [F] = m[\omega_{+}] - n[\omega_{-}],
  \end{equation*}
  where \( [\omega_{\pm}] \) is the class of the Fubini-Study metric
  on~\( \CP(1)_{\pm} \), normalised so that
  \( \int_{\CP(1)_{\pm}}\omega_{\pm} = 1 \).  The condition \( [\omega] \cup [F] = 0 \) implies
  \begin{equation*} %
    [\omega] = k(m[\omega_{+}] + n[\omega_{-}])
  \end{equation*}
  for some \( k \in \bR \) so that \( km > 0 \) and \( kn > 0 \).

  The \( T^{2} \)-action on~\( \CP(1)_{+} \times \CP(1)_{-} \) is given by circle
  actions on each factor.
  On \( \CP(1) \) we have the action
  \( [z_{0},z_{1}] \mapsto [e^{i\theta}z_{0}, z_{1}] \) with fixed points
  \( p = [1,0], q = [0,1] \in \CP(1) \).
  Let \( \la{X}_{1} \) be the generator of the action of \( \CP(1)_{+} \) and
  \( \la{X}_{2} \) that on \( \CP(1)_{-} \).
  The \( T^{2} \)-action has fixed points
  \begin{equation*}
    \pA = (p_{+},p_{-}),\quad
    \pB = (p_{+},q_{-}),\quad
    \pC = (q_{+},p_{-}),\quad
    \pD = (q_{+},p_{+}).
  \end{equation*}
  The curves
  \begin{equation*}
    \cE_{\pA\pB} = \Set{p_{+}} \times \CP(1)_{-},\quad \cE_{\pC\pD} = \Set{q_{+}} \times
    \CP(1)_{-}
  \end{equation*}
  are fixed by~\( \la{X}_{1} \), and the element \( \la{X}_{2} \) fixes the
  curves
  \begin{equation*}
    \cE_{\pA\pC} =  \CP(1)_{+} \times \Set{p_{-}},\quad \cE_{\pB\pD} = \CP(1)_{+} \times
    \Set{q_{-}}.
  \end{equation*}
  Each of these sets maps to an edge under~\( \mu \), and there are no other
  points with non-trivial stabiliser.

  As \( \mu \), \( \lambda \), \( \nu \) are only defined up to additive constants, we
  choose them so that
  \begin{equation*}
    \mu(\pA) = (0,0),\quad \lambda(\pA) = (0,0),\quad \nu(\pA) = (0,0,0).
  \end{equation*}

  Let us now compute \( \mu(\pB) \) and \( \lambda(\pB) \).
  The points \( \pA \) and \( \pB \) are joined by \( \cE_{\pA\pB} \) with
  stabiliser \( \la{X}_{1} = 1\la{X}_{1} + 0\la{X}_{2} \), giving
  \( (r_{1},r_{2}) = (1,0) \), up to sign.  Now \( t_{\pA\pB} = [\omega] \cap
  [\cE_{\pA\pB}] = kn \) and \( s_{\pA\pB} = [F] \cap [\cE_{\pA\pB}] = n \), so
  by \eqref{eq:mu-diff-2}, \eqref{eq:lambda-diff} and \eqref{eq:nu-diff}
  \begin{equation*}
    \mu(\pB) = kn(0,1),\quad
    \lambda(\pB) = n(0,1),\quad
    \nu_{3}(\pB) = kn (1 \times 0 + 0 \times n) = 0.
  \end{equation*}

  Similarly, for \( \cE_{\pB\pD} \) the stabiliser is generated by
  \( \la{X}_{2} = 0\la{X}_{1} + 1\la{X}_{2} \) up to sign.
  We choose \( (r_{1},r_{2}) = (0,-1) \), so that \( t_{\pB\pD} = km \), \(
  s_{\pB\pD} = m \) and
  \begin{gather*}
    \mu(\pD) = \mu(\pB) + km(1,0) = (km,kn),\quad
    \lambda(\pD) = \lambda(\pB) - (m,0) = (-m,n),\\
    \nu_{3}(\pD) = \nu_{3}(\pB) + km(0 \times -m + (-1) \times n) = -kmn.
  \end{gather*}
  Similarly, \( \nu(\pC) = (km,0,0) \).
  So the lifted graph is as in Figure~\ref{fig:Mmn}, cf.~\cite{Dixon:G2-ALC-talk}.

  \begin{figure}
    \begin{equation*}
      \begin{tikzpicture}[x ={(2cm,0)}, y = {(1.4cm,1.4cm)}, z = {(0,1cm)},
        font={\scriptsize}]
        \coordinate (P1) at (0,0,0);
        \coordinate (P2) at (1,0,0);
        \coordinate (P3) at (0,1,0);
        \coordinate (P4) at (1,1,-1);
        \fill[green!20, fill opacity=.3] (P2) -- (P3) --
        (P4) -- (P2);
        \fill[red!60, fill opacity=.3] (P1) -- (P3)
        -- (1,1,0) -- (P2) -- (P1);
        \begin{scope}[every node/.style = {circle, inner sep=2pt, fill}]
          \node at (P1) {};
          \node at (P2) {};
          \node at (P3) {};
          \node at (P4) {};
        \end{scope}
        \node[above left] at (P1) {\( \nu(\pA) \)};
        \node[below] at (P2) {\( \nu(\pC) \)};
        \node[above right] at (P3) {\( \nu(\pB) \)};
        \node[below] at (P4) {\( \nu(\pD) \)};
        \coordinate (Q1) at ($(P1)+(-.2,-.2,0)$);
        \coordinate (Q2) at ($(P2)+(1.3,-1.3,1.3)$);
        \coordinate (Q3) at ($(P3)+(-.4,.4,.4)$);
        \coordinate (Q4) at ($(P4)+(.2,.2,-.4)$);
        \draw[thick] (P1) edge (Q1)
        -- (P2) edge (Q2)
        -- (P4) edge (Q4)
        -- (P3) edge (Q3)
        -- (P1);
        \node[below] at ($(P1)!.5!(P2)$) {\( km \)};
        \node[above left] at ($(P1)!.5!(P3)$) {\( kn \)};
        \draw[dotted] (P2) -- (P3) -- (1,1,0) -- cycle;
        \draw[dashed] (P4) -- node[midway,right] {\( kmn \)} (1,1,0);
        \node[right] at (1,1,0) {\( \mu(\pD) \)};
      \end{tikzpicture}
    \end{equation*}
    \caption{The \( G_{2} \)-trivalent graph for \( M_{m,n} \), with the red
    region giving the compact part of underlying planar \( \SU(3) \)-graph.}
    \label{fig:Mmn}
  \end{figure}
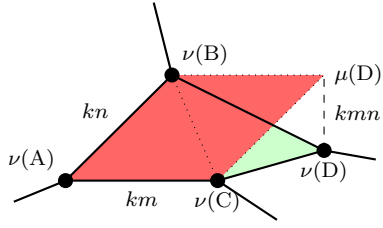
\end{example}

\begin{example}[Hexagon, star triangulation]
  \label{ex:hex}
  The fan for this example is given on the left in Figure~\ref{fig:hexagon}, with the
  vectors \( u_{1},u_{2},\ldots, u_{6},u_{E} \in \mathbb{Z}^2 \) chosen so that
  \begin{gather*}
    u_{1} - u_{E} = (-1,-1),\quad u_{2} - u_{E} = (0,-1),\quad u_{3} - u_{E} =
    (1,0),\quad \\
    u_{4} - u_{E} = (1,1),\quad u_{5} = (0,1) - u_{E},\quad u_{6} - u_{E} = (-1,0).
  \end{gather*}
  The second cohomology is generated by the divisors modulo \eqref{eq:D-rel}, so
  each class may be specified as linear combination of four divisors.
  Following the approach of Acharya et al.\ \cite{Acharya-FNS:G2-conifolds}, we choose
  \( D_{1},D_{2},D_{3},D_{4} \) as our generating set and put
  \begin{equation*} [F] = f_{1}D_1 + f_{2}D_{2} + f_{3}D_{3} + f_{4}D_{4},\quad [\omega]
    = w_{1}D_{1} + w_2D_{2} + w_{3}D_{3} + w_{4}D_{4}.
  \end{equation*}

  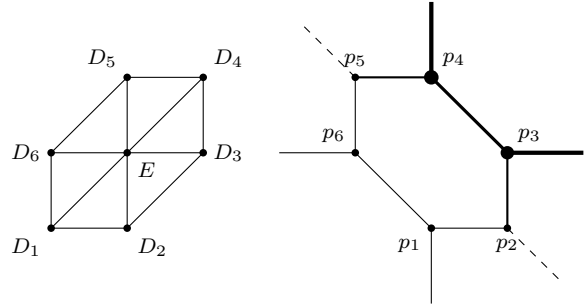
\begin{figure}
    \centering
    \begin{tikzpicture}[vertex/.style={circle,inner
      sep=1pt,fill},font={\scriptsize}]
      \begin{scope}
        \coordinate (D1) at (-1,-1);
        \coordinate (D2) at (0,-1);
        \coordinate (D3) at (1,0);
        \coordinate (D4) at (1,1);
        \coordinate (D5) at (0,1);
        \coordinate (D6) at (-1,0);
        \coordinate (E) at (0,0);
        \draw (D1) node[vertex] {} node[below left] {\( D_{1} \)}
        -- (D2) node[vertex] {} node[below right] {\( D_{2} \)}
        -- (D3) node[vertex] {} node[right] {\( D_{3} \)}
        -- (D4) node[vertex] {} node[above right] {\( D_{4} \)}
        -- (D5) node[vertex] {} node[above left] {\( D_{5} \)}
        -- (D6) node[vertex] {} node[left] {\( D_{6} \)}
        -- cycle;
        \draw (E) node[vertex] {} node[below right] {\( E \)};
        \draw (E)
        edge (D1)
        edge (D2)
        edge (D3)
        edge (D4)
        edge (D5)
        edge (D6);
      \end{scope}
      \begin{scope}[xshift=4cm,yshift=-1cm,vertex-circle/.style={circle,inner
        sep=1.5pt,draw,fill}]
        \coordinate (P1) at (0,0);
        \coordinate (P2) at (1,0);
        \coordinate (P3) at (1,1);
        \coordinate (P4) at (0,2);
        \coordinate (P5) at (-1,2);
        \coordinate (P6) at (-1,1);
        \draw (P1) edge +(0,-1);
        \draw (P2) edge[dashed] +(.7,-.7);
        \draw (P3) edge[ultra thick] +(1,0);
        \draw (P4) edge[ultra thick] +(0,1);
        \draw (P5) edge[dashed] +(-.7,.7);
        \draw (P6) edge +(-1,0);
        \draw (P5) node[vertex] {} node[above] {\( p_{5} \)}
        -- (P6) node[vertex] {} node[above left] {\( p_{6} \)}
        -- (P1) node[vertex] {} node[below left] {\( p_{1} \)}
        -- (P2) node[vertex] {} node[below] {\( p_{2} \)};
        \draw[thick] (P5) -- (P4);
        \draw[very thick] (P3)
        -- (P4) node[vertex-circle] {} node[above right] {\( p_{4} \)};
        \draw[thick] (P2)
        -- (P3) node[vertex-circle] {} node[above right] {\( p_{3} \)};
      \end{scope}
    \end{tikzpicture}
    \caption{Fan diagram for the star triangulation of the hexagon, and the
    corresponding toric diagram.}
    \label{fig:hexagon}
  \end{figure}

  Triple product computations, using the relations
  \eqref{eq:quadrilateral} and \eqref{eq:quad-triple},
  cf.\ \cite{Acharya-FNS:G2-conifolds,Xie-Y:3SCFT}, give the volumes of the six
  compact curves \( \cE_{j} = A_{j} \cap E \):
  \begin{gather*}
    t_{1} = w_{2} - w_{1},\quad
    t_{2} = w_{1} - w_{2} + w_{3},\quad
    t_{3} = w_{2} - w_{3} + w_{4},\\
    t_{4} = w_{3} - w_{4},\quad
    t_{5} = w_{4},\quad
    t_{6} = w_{1}.
  \end{gather*}
  From these we see that \( [\omega] \) is a Kähler class if
  \begin{equation*}
    w_{1} > 0,\qquad w_{4} > 0,\qquad
    w_{3} > w_{2} - w_{1} > 0,\qquad
    w_{2} > w_{3} - w_{4} > 0.
  \end{equation*}

  We choose our additive constants so that
  \begin{equation*}
    p_1 = \mu(q_1) = (0,0), \quad \lambda(q_1) = (0,0), \quad \nu(q_{1}) = (0,0,0).
  \end{equation*}
  Using the values of~\( t_{j} \), we get the remaining five vertices,
  \(p_{i} = \mu(q_{i}) \), of the planar hexagon:
  \begin{gather*}
    p_{2} = (w_{1} - w_{2} + w_{3}, 0),\quad
    p_{3} = (w_{1} - w_{2} + w_{3}, w_{2} - w_{3} + w_{4}),\\
    p_{4} = (w_{1} - w_{2} + w_{4}, w_{2}),\quad
    p_{5} = (w_{1} - w_{2}, w_{2}),\quad
    p_{6} = (w_{1} - w_{2}, w_{2} - w_{1}).
  \end{gather*}

  Next, we compute the lifts, using \eqref{eq:nu-q-diff}.
  First, we need to compute the numbers \( \lambda(q_i) \).
  In order to express these in terms of the coefficients \( f_1,\ldots,f_4 \), we use
  the formula \eqref{eq:lambda-diff} in the form~\eqref{eq:lambda-J}.
  Here \( s_{j} = [F] \cap [\cE_{j}] \) and this is computed in the same way
  as~\( t_{j} \), just with \( f_{i} \) replacing each \( w_{i} \).
  We therefore find
  \begin{gather*}
    \lambda(q_{2}) = (-f_{1} + f_{2} - f_{3}, 0),\quad
    \lambda(q_{3}) = (-f_{1} + f_{2} - f_{3}, -f_{2} + f_{3} - f_{4}),\\
    \lambda(q_{4}) = (-f_{1} + f_{2} - f_{4}, -f_{2}),\quad
    \lambda(q_{5}) = (-f_{1} + f_{2}, -f_{2}),\\
    \lambda(q_{6}) = (-f_{1} + f_{2}, -f_{1} -f_{2}).
  \end{gather*}
  Based on this, we compute the lifts \( \nu_3(q_{j}) \) using
  \eqref{eq:nu-q-diff}
  \begin{gather*}
    \nu_{3}(q_{2}) = 0,\quad
    \nu_{3}(q_{3}) = (-f_{1} + f_{2} - f_{3})(w_{2} - w_{3} + w_{4}),\\
    \nu_{3}(q_{4}) = - f_{1}w_{2} + (f_{2}-f_{3})(w_{2} - w_{3} + w_{4})  -
    f_{4}(w_{3} - w_{4}),\\
    \nu_{3}(q_{5}) = - f_{1}w_{2} + f_{2}(w_{2} - w_{3}) - f_{3}(w_{2} - w_{3} +
    w_{4}) - f_{4}(w_{3} - w_{4}),\\
    \begin{split}
      \nu_{3}(q_{6})
      &= f_{1}(w_{1} - w_{2}) - f_{2}(w_{1} - w_{2} + w_{3})
        - f_{3}(w_{2} - w_{3} + w_{4}) \eqbreak
        - f_{4}(w_{3} - w_{4}).
    \end{split}
  \end{gather*}
  In this case, the explicit expression for \( [\omega] \cup [F] = 0 \) reads
  \begin{equation*}
    \begin{split}
      f_{1}(w_{2} - w_{1}) + f_{2}(w_{1} - w_{2} + w_{3})
      + f_{3}(w_{2} - w_{3} + w_{4})
      + f_{4}(w_{3} - w_{4}) = 0.
    \end{split}
  \end{equation*}
  Note that this is a linear condition in the \( f_{i} \).
  However each \( f_{i} \) is integral, so this is also a discrete constraint on
  the~\( w_{i} \), and certain choices of \( [\omega] \) will give no non-zero
  solution.
  Also note that each coefficient of \( f_{i} \) is positive, so at least two of
  the \( f_{i} \) have opposite signs.

  With \( w_{1} = w_{4} = 1 \), \( w_{2} = w_{3} = 2 \),
  \( f_{1} = f_{2} = 1 = -f_{3} = -f_{4} \) we obtain the toric diagram on the
  right in Figure~\ref{fig:hexagon}.
  The thickness of lines and points indicate the relative values of
  \( \nu_{3} \).
  The half-lines out of \( p_{1} \) and \( p_{6} \) line in the plane of
  \( p_{5}p_{6}p_{1}p_{2} \), those out of \( p_{2} \) and \( p_{6} \) descend
  from this plane.
  The points \( p_{3},p_{4} \) are higher than this plan, and the half lines out
  of \( p_{3} \) and \( p_{4} \) rise out of a parallel plane through
  \( p_{3}p_{4} \).
  Other values of \( w_{i} \) give more irregular figures, with different
  lengths for \( p_{1}p_{2} \), \( p_{2}p_{3} \) and \( p_{3}p_{4} \).
  Varying the \( f_{j} \) also enables moving \( p_{5} \) out of the
  \( p_{6}p_{1}p_{2} \)-plane, making \( p_{3}p_{4} \) skew relative to
  \( p_{1}p_{6} \).
\end{example}

\begin{remark}
  According to Altmann~\cite{Altmann:Gorenstein}, the hexagon singularity admits
  two types of versal deformation: one into three skew lines and one into two
  triangles.
  We expect that in such cases the graph \( \Gamma_{B} \) from such a deformation has
  two separate components lying in distinct parallel planes.
  A further example in \cite{Altmann:Gorenstein}, based on an octahedron, has
  versal deformations that admit non-trivial circle bundles, and again we expect
  the \( G_{2} \)-graphs~\( \Gamma_{M} \) to have components lying in distinct
  parallel affine subspaces.
\end{remark}

\begin{example}[Second resolution of the three-fold quotient of the conifold]
  As in Acharya et al.\ \cite{Acharya-FNS:G2-conifolds}, let us consider the
  \( \bZ/3\bZ \)-quotient of the conifold.  The rays of the fan are given by
  \begin{gather*}
    u_{2} - u_{1} = (1,0),\quad u_{3} - u_{1} = (0,3),\quad u_{4} - u_{1} = (-1,3),\\
    v_{1} - u_{1} = (0,1),\quad v_{2} - u_{1} = (0,2),
  \end{gather*}
  with \( u_{i} \) corresponding to divisors \( D_{i} \), and \( v_{j} \) to
  compact divisors \( E_{j} \).
  There are two resolutions as shown in Figure~\ref{fig:Z3-conifold}.
  We will work with the second resolution, whose corresponding toric diagram is
  given in Figure~\ref{fig:mmZ3}.
  \begin{figure}
    \centering
    \begin{tikzpicture}[vertex/.style={circle,inner
      sep=1pt,fill},font={\scriptsize}]
      \begin{scope}
        \coordinate (D1) at (0,0);
        \coordinate (D2) at (1,0);
        \coordinate (D3) at (0,3);
        \coordinate (D4) at (-1,3);
        \coordinate (E1) at (0,1);
        \coordinate (E2) at (0,2);
        \draw (D1) node[vertex] {} node[below] {\( D_{1} \)}
        -- (D2) node[vertex] {} node[below right] {\( D_{2} \)}
        -- (D3) node[vertex] {} node[above] { \( D_{3} \) }
        -- (D4) node[vertex] {} node[above left] {\( D_{4} \)}
        -- cycle;
        \draw (E1) node[vertex] {} node[right] {\( E_{1} \)};
        \draw (E2) node[vertex] {} node[left] {\( E_{2} \)};
      \end{scope}
      \begin{scope}[xshift=3cm]
        \coordinate (D1) at (0,0);
        \coordinate (D2) at (1,0);
        \coordinate (D3) at (0,3);
        \coordinate (D4) at (-1,3);
        \coordinate (E1) at (0,1);
        \coordinate (E2) at (0,2);
        \draw (D1) node[vertex] {} node[below] {\( D_{1} \)}
        -- (D2) node[vertex] {} node[below right] {\( D_{2} \)}
        -- (D3) node[vertex] {} node[above] { \( D_{3} \) }
        -- (D4) node[vertex] {} node[above left] {\( D_{4} \)}
        -- cycle;
        \draw (D1)
        -- (E1) node[vertex] {} node[right] {\( E_{1} \)}
        -- (E2) node[vertex] {} node[left] {\( E_{2} \)}
        -- (D3);
        \draw (D2) -- (E1) -- (D4);
        \draw (D2) -- (E2) -- (D4);
      \end{scope}
      \begin{scope}[xshift=6cm]
        \coordinate (D1) at (0,0);
        \coordinate (D2) at (1,0);
        \coordinate (D3) at (0,3);
        \coordinate (D4) at (-1,3);
        \coordinate (E1) at (0,1);
        \coordinate (E2) at (0,2);
        \draw (D1) node[vertex] {} node[below] {\( D_{1} \)}
        -- (D2) node[vertex] {} node[below right] {\( D_{2} \)}
        -- (D3) node[vertex] {} node[above] { \( D_{3} \) }
        -- (D4) node[vertex] {} node[above left] {\( D_{4} \)}
        -- cycle;
        \draw (D1)
        -- (E1) node[vertex] {} node[below right=1ex and -.5ex] {\( E_{1} \)};
        \draw (E2) node[vertex] {} node[above left=1ex and -.5ex] {\( E_{2} \)}
        -- (D3);
        \draw (D2) -- (E1) -- (D4);
        \draw (D2) -- (E2) -- (D4);
        \draw (D2) -- (D4);
      \end{scope}
    \end{tikzpicture}
    \caption{The fan diagrams of \( \bZ/3\bZ \)-quotient of the conifold (left)
    and its two resolutions.}
    \label{fig:Z3-conifold}
  \end{figure}
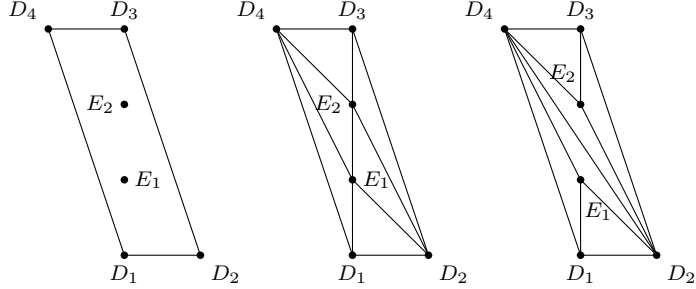

  \begin{figure}
    \centering
    \begin{tikzpicture}[vertex/.style={circle,inner
      sep=1pt,fill},font={\scriptsize}]
      \coordinate (P1) at (-.5,1.5);
      \coordinate (P2) at ($(P1)+.5*(3,2)$);
      \coordinate (P3) at ($(P1)+.5*(-1,-1)$);
      \coordinate (P4) at ($(P3)+.5*(-1,0)$);
      \coordinate (P5) at ($(P2)+.5*(1,1)$);
      \coordinate (P6) at ($(P5)+.5*(1,0)$);
      \draw (P1) node[vertex] {} node[above] {\( p_{1} \)}
      -- (P2) node[vertex] {} node[below] {\( p_{2} \)};
      \draw (P1)
      -- (P3) node[vertex] {} node[below] {\( p_{3} \)}
      -- (P4) node[vertex] {} node[left] {\( p_{4} \)}
      -- cycle
      ;
      \draw (P2)
      -- (P5) node[vertex] {} node[above] {\( p_{5} \)}
      -- (P6) node[vertex] {} node[right] {\( p_{6} \)}
      -- cycle;
    \end{tikzpicture}
    \caption{Compact part of the moment graph, before lifting, corresponding to
    the second resolution of the \( \bZ/\bZ3 \)-quotient of the conifold.}
    \label{fig:mmZ3}
  \end{figure}
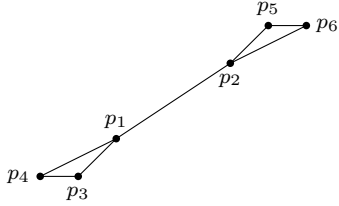
  The space of divisors is spanned by \( D_{1},D_{2},D_{3} \), so we write
  \( [\omega] = w_{1}D_{1} + w_{2}D_{2} + w_{3}D_{3} \).
  This defines a Kähler structure if \( w_{1} > - w_{2} > 0 \) and
  \( w_{3} > -w_{2} > 0 \).
  Writing \( [F] = f_{1}D_{1} + f_{2}D_{2} + f_{3}D_{3} \), the condition
  \( [\omega] \cup [F] = 0 \) is
  \begin{equation*}
    (f_{1} + f_{2})(w_{1} + w_{2}) = 0 = (f_{2} + f_{3})(w_{2} + w_{3}),
  \end{equation*}
  so \( f_{1} = -f_{2} = f_{3} \).

  Putting \( p_{1} = 0 \), we have
  \begin{equation*}
    p_{2} = -w_{2}(3,2),\quad
    p_{3} = -(w_{1}+w_{2})(1,1),\quad
    p_{6} = (2w_{3}-w_{2},w_{3}-w_{2}).
  \end{equation*}
  Similarly, we compute
  \begin{gather*}
    \lambda(p_{2}) - \lambda(p_{1}) = f_{2}(3,2),\quad
    \lambda(p_{3}) - \lambda(p_{1}) = (f_{1}+f_{2})(1,1) = (0,0),\\
    \lambda(p_{6}) - \lambda(p_{2}) = -(f_{2}+f_{3})(2,1) = (0,0),
  \end{gather*}
  and so, if \( \lambda(p_{1}) = 0 \), we get
  \begin{gather*}
    \nu_{3}(p_{2}) - \nu_{3}(p_{1}) = 0,\quad
    \nu_{3}(p_{3}) - \nu_{3}(p_{1}) = 0,\\
    \nu_{3}(p_{6}) - \nu_{3}(p_{2}) = f_{2}(w_{2}+w_{3}) \ne 0.
  \end{gather*}
  So the lifted graph is not planar, with the two triangles rotated relative to
  each other along the central axis.
\end{example}


\begin{thebibliography}{10}

\bibitem{Acharya-FNS:G2-conifolds}
B.~S. Acharya, L.~Foscolo, M.~Najjar, and E.~E. Svanes, \emph{New
  {$G_2$}-conifolds in {$M$}-theory and their field theory interpretation}, J.
  High Energy Phys. (2021), no.~5, 32~pp.

\bibitem{Aharony-HK:pq-5}
O.~Aharony, A.~Hanany, and B.~Kol, \emph{Webs of {$(p,q)$} $5$-branes, five
  dimensional field theories and grid diagrams}, J. High Energy Phys.
  \textbf{1998} (1998), no.~1, 51~pp.

\bibitem{Altmann:Gorenstein}
K.~Altmann, \emph{The versal deformation of an isolated toric {G}orenstein
  singularity}, Invent. Math. \textbf{128} (1997), no.~3, 443--479.

\bibitem{Apostolov-S:K-G2}
V.~Apostolov and S.~Salamon, \emph{{Kähler} reduction of metrics with holonomy
  {$G_2$}}, Comm. Math. Phys. \textbf{246} (2004), no.~1, 43--61.

\bibitem{Bouchard:CY}
V.~Bouchard, \emph{Lectures on complex geometry, {C}alabi-{Y}au manifolds and
  toric geometry}, preprint arXiv: \texttt{hep-th/0702063}, 2007.

\bibitem{Bryant:exceptional}
R.~L. Bryant, \emph{Metrics with exceptional holonomy}, Ann. of Math.
  \textbf{126} (1987), 525--576.

\bibitem{Bryant-Salamon:exceptional}
R.~L. Bryant and S.~M. Salamon, \emph{On the construction of some complete
  metrics with exceptional holonomy}, Duke Math.~J. \textbf{58} (1989),
  829--850.

\bibitem{Cavalleri:T2-AC}
N.~Cavalleri, \emph{Complete non-compact {$\operatorname{Spin}(7)$}-manifolds
  from {$T^2$}-bundles over {AC} {C}alabi-{Y}au manifolds}, preprint arXiv:
  \texttt{2407.19486 [math.DG]}, 2024.

\bibitem{Cho-FO:toric-SE}
K.~Cho, A.~Futaki, and H.~Ono, \emph{Uniqueness and examples of compact toric
  {S}asaki-{E}instein metrics}, Comm. Math. Phys. \textbf{277} (2008), no.~2,
  439--458.

\bibitem{Cleyton-S:G2}
R.~Cleyton and A.~F. Swann, \emph{Cohomogeneity-one {$G_2$}-structures},
  J.~Geom. Phys. \textbf{44} (2002), no.~2--3, 202--220.

\bibitem{Conlon-H:acCY-class}
R.~J. Conlon and H.-J. Hein, \emph{Classification of asymptotically conical
  {C}alabi-{Y}au manifolds}, Duke Math. J. \textbf{173} (2024), no.~5,
  947--1015.

\bibitem{Conti-T:symplectic}
D.~Conti and A.~Tomassini, \emph{Special symplectic six-manifolds}, Q. J. Math.
  \textbf{58} (2007), no.~3, 297--311.

\bibitem{Cox-LS:toric}
D.~A. Cox, J.~B. Little, and H.~K. Schenck, \emph{Toric varieties}, Graduate
  Studies in Mathematics, vol. 124, American Mathematical Society, Providence,
  RI, 2011.

\bibitem{Dixon:G2-ALC-talk} K.~Dixon, \emph{Toric {ALC} {$G_2$} manifolds},
  online talk Simons Collaboration on Special Holonomy in Geometry, Analysis,
  and Physics, \url{https://sites.duke.edu/scshgap/kael-dixon-lectures/}, 2020.

\bibitem{Foscolo-HN:from-CY}
L.~Foscolo, M.~Haskins, and J.~Nordström, \emph{Complete non-compact
  {$G_2$}–manifolds from asymptotically conical {C}alabi–{Y}au
  {$3$}-folds}, Duke Math. J. \textbf{170} (2021), no.~15, 3323--3416.

\bibitem{Foscolo-HN:G2TNEH}
\bysame, \emph{Infinitely many new families of complete cohomogeneity one
  {$G_2$}-manifolds: {$G_2$} analogues of the {Taub-NUT} and {E}guchi-{H}anson
  spaces}, J. Eur. Math. Soc. (JEMS) \textbf{23} (2021), no.~7, 2153--2220.

\bibitem{Futaki-OW:transverse}
A.~Futaki, H.~Ono, and G.~Wang, \emph{Transverse {Kähler} geometry of {S}asaki
  manifolds and toric {S}asaki-{E}instein manifolds}, J. Differential Geom.
  \textbf{83} (2009), no.~3, 585--635.

\bibitem{Gibbons-H:multi}
G.~W. Gibbons and S.~W. Hawking, \emph{Gravitational multi-instantons}, Phys.
  Lett. \textbf{B78} (1978), 430--432.

\bibitem{Lashof-MS:equivariant}
R.~K. Lashof, J.~P. May, and G.~B. Segal, \emph{Equivariant bundles with
  Abelian structural group}, Proceedings of the Northwestern Homotopy Theory
  Conference (Evanston, Ill., 1982), Contemp. Math., vol.~19, Amer. Math. Soc.,
  1983, pp.~167--176.

\bibitem{Li-LLZ:top-vertex}
J.~Li, C.-C.~M. Liu, K.~Liu, and J.~Zhou, \emph{A mathematical theory of the
  topological vertex}, Geom. Topol. \textbf{13} (2009), no.~1, 527--621.

\bibitem{Li:SYZ-Calabi-Yau}
Y.~Li, \emph{{SYZ} geometry for {C}alabi-{Y}au {$3$}-folds: {Taub-NUT} and
  {O}oguri-{V}afa type metrics}, Mem. Amer. Math. Soc. \textbf{292} (2023),
  no.~1453, 126~pp.

\bibitem{Madsen-S:multi-moment}
T.~B. Madsen and A.~F. Swann, \emph{Multi-moment maps}, Adv. Math. \textbf{229}
  (2012), 2287--2309.

\bibitem{Madsen-S:closed}
\bysame, \emph{Closed forms and multi-moment maps}, Geom. Dedicata \textbf{165}
  (2013), no.~1, 25--52.

\bibitem{Madsen-S:toric-G2}
\bysame, \emph{Toric geometry of {$G_2$}-manifolds}, Geom. Topol. \textbf{23}
  (2019), no.~7, 3459--3500.

\bibitem{Madsen-S:multi-large}
\bysame, \emph{Multi-toric geometries with larger compact symmetry}, preprint
  arXiv: \texttt{2407.03693 [math.DG]}, 2024.

\bibitem{Martelli-S:toric}
D.~Martelli and J.~Sparks, \emph{Toric geometry, {S}asaki-{E}instein manifolds
  and a new infinite class of {AdS}/{CFT} duals}, Comm. Math. Phys.
  \textbf{262} (2006), no.~1, 51--89.

\bibitem{Swann:twist}
A.~F. Swann, \emph{Twisting {H}ermitian and hypercomplex geometries}, Duke
  Math. J. \textbf{155} (2010), no.~2, 403--431.

\bibitem{vanCoevering:crepant}
C.~van Coevering, \emph{Ricci-flat {Kähler} metrics on crepant resolutions of
  {Kähler} cones}, Math. Ann. \textbf{347} (2010), no.~3, 581--611.

\bibitem{vanCoevering:examples}
\bysame, \emph{Examples of asymptotically conical {R}icci-flat {Kähler}
  manifolds}, Math. Z. \textbf{267} (2011), no.~1-2, 465--496.

\bibitem{Xie-Y:3SCFT}
D.~Xie and S.-T. Yau, \emph{Three dimensional canonical singularity and five
  dimensional {$\mathcal{N}=1$} {SCFT}}, J. High Energy Phys. (2017), no.~6,
  33~pp.

\bibitem{Yeats:combinatorial-QFT}
K.~Yeats, \emph{A combinatorial perspective on quantum field theory},
  Springer Briefs in Mathematical Physics, vol.~15, Springer, Cham, 2017.

\end{thebibliography}
\providecommand{\bysame}{\leavevmode\hbox to3em{\hrulefill}\thinspace}
\providecommand{\MR}{\relax\ifhmode\unskip\space\fi MR }
\providecommand{\MRhref}[2]{%
  \href{http://www.ams.org/mathscinet-getitem?mr=#1}{#2}
}
\providecommand{\href}[2]{#2}

\end{document}